\documentclass[12pt, oneside]{scrartcl}
\usepackage[english]{babel}
\usepackage[T1]{fontenc}
\usepackage{amssymb}
\usepackage{amsmath}
\usepackage{hhline}
\usepackage{longtable}
\usepackage{amscd}
\usepackage{array}
\usepackage{delarray}
\usepackage{multicol}
\usepackage{makeidx}

\usepackage{float}

\usepackage{tikz}
\usepackage{calc}
\usepackage{hyperref}
\usepackage{amsthm}
\usepackage{cleveref}
\usetikzlibrary{angles,quotes}
\usetikzlibrary{calc}
\usetikzlibrary{patterns}
\usetikzlibrary{graphs}
\usetikzlibrary{graphs.standard}
\usetikzlibrary{decorations.markings,shapes.geometric,positioning}

\newtheorem{theorem}{Theorem}

\newtheorem{conjecture}{Conjecture}

\newtheorem{lemma}{Lemma}
\newtheorem{remark}{Remark}
\newtheorem{claim}{Claim}[section]

\newtheorem{problem}{Problem}
\newtheorem{proposition}{Proposition}

\usepackage{tikz}
\author{ Maidoun Mortada \footnote{Corresponding author. KALMA Laboratory, Faculty of Sciences, Lebanese University, Baalbek, Lebanon; Graph Theory and Operation Research, Department of Mathematics and Physics, Lebanese International University (LIU), Beirut, Lebanon; and Basic and
Applied Sciences Research, Al Maaref University, Beirut, Lebanon. Email: maydoun.mortada@liu.edu.lb} \hspace{0.4cm} Ayman El Zein \footnote{Computer Science Department, University of Sciences and Arts in Lebanon, Beirut, Lebanon. Email: a.elzein@usal.edu.lb} \\
Sara Al Hajjar \footnote{
KALMA Laboratory, Lebanese University, Beirut, Lebanon \\
Univ. Bordeaux, CNRS, Bordeaux INP, LaBRI, UMR 5800, F-33400, Talence,
France. Email: Sara.bhajjar@gmail.com.}
}
\date{}
    
\begin{document} 
\title{On $(1,1,2,3)$- and $(1,1,3,3,3)$-Packing Colorings of Claw-Free Subcubic Graphs}
\maketitle

\begin{abstract}
For a non-decreasing sequence $S=(a_1,a_2,\ldots,a_r)$ of positive integers, an $S$-packing coloring of a graph $G$ is a partition of $V(G)$ into sets $A_1,\ldots,A_r$ such that any two distinct vertices in $A_i$ are at distance greater than $a_i$, for every $i\in\{1,\ldots,r\}$. Gastineau and Togni [\emph{Discrete Math.} 339 (2016), 2461--2470] asked whether every subcubic graph, except the Petersen graph, is $(1,1,2,3)$-packing colorable. In this paper, we prove that every claw-free subcubic graph is $(1,1,2,3)$-packing colorable. Moreover, we show that every connected claw-free subcubic graph, except a single graph $\mathcal{H}$, is $(1,1,3,3,3)$-packing colorable, thereby confirming a conjecture of the first two authors. Both results are best possible. Our proofs rely on a structural framework based on the skeleton and core graphs of a claw-free subcubic graph, together with a Hall-type matching argument that reduces the construction of suitable $3$-packings to a matching problem in an auxiliary bipartite graph.

\end{abstract}

\noindent\textbf{Mathematics Subject Classification:} 05C15\\
\textbf{Keywords}: graph coloring; packing coloring; claw-free; cubic graph; subcubic graph; Hall's theorem.

\section{Introduction}

Packing coloring is a distance-constrained variant of graph coloring. Given a non-decreasing sequence $S=(a_1,a_2,\ldots,a_r)$ of positive integers, an $S$-packing coloring of a graph $G$ is a partition of $V(G)$ into sets $A_1,\ldots,A_r$ such that any two distinct vertices in $A_i$ are at distance greater than $a_i$, for every $i \in \{1,\ldots,r\}$. When $S=(1,2,\ldots,r)$, this notion coincides with the classical packing coloring introduced by Goddard \emph{et al.}~\cite{d}, and the minimum value of $r$ for which such a coloring exists is called the packing chromatic number of $G$. Since its introduction, packing coloring and its generalization to $S$-packing coloring have received considerable attention, especially for graphs of bounded maximum degree and, in particular, for subcubic graphs; see, for instance, \cite{bkl,3,bf,11,8,9,AM1,AM2,16,outer,24,m,LW,LZZ24,AM3,26,27,mai,mai1,i,a,ZZ2026} and the survey by Bre\v{s}ar \emph{et al.}~\cite{bfk}.

Among the numerous questions concerning packing colorings of subcubic graphs, one of the most influential was posed by Gastineau and Togni~\cite{16}, who asked whether every subcubic graph, except the Petersen graph, admits a $(1,1,2,3)$-packing coloring. This problem has stimulated a series of works devoted to improving the known packing colorings of subcubic graphs. In particular, Liu, Zhang, and Zhang~\cite{LZZ24} proved that every subcubic graph is $(1,1,2,2,3)$-packing colorable, providing the first general result using only five colors. Later, El Zein and Mortada~\cite{AM1} showed that every non-regular subcubic graph is $(1,1,2,2)$-packing colorable and, more generally, that every subcubic graph admits a $(1,1,2,2,k)$-packing coloring for every integer $k\geq3$, thereby extending and strengthening several previously known results. Most recently, Hou, Liu, and Wang~\cite{HLW} established that every connected subcubic graph, except the Petersen graph, is $(1,1,2,2)$-packing colorable, completely settling the corresponding $(1,1,2,2)$-packing coloring problem. Nevertheless, the original conjecture of Gastineau and Togni for the stronger sequence $(1,1,2,3)$ remains open. This naturally raises the question of whether the conjecture holds for important subclasses of subcubic graphs.

The present paper is motivated by this question. We investigate claw-free subcubic graphs, an important subclasse of subcubic graphs, and prove that every claw-free subcubic graph admits a $(1,1,2,3)$-packing coloring. Thus, our result confirms the original conjectured behavior for this natural graph class. We also prove a stronger structural result by showing that every claw-free subcubic graph, with the exception of a single graph $\mathcal{H}$ (see Figure \ref{figure_1}), admits a $(1,1,3,3,3)$-packing coloring. This confirms the conjecture proposed by the first two authors \cite{AM4}. Both results are best possible, showing that the obtained packing colorings cannot, in general, be further improved.

Our proofs are based on a structural and combinatorial framework that reduces the original distance-coloring problem to a sequence of simpler selection problems. Starting from a claw-free subcubic graph $G$, we first construct its skeleton $S_G$ by suppressing the paths whose internal vertices do not belong to triangles. This operation removes structurally inessential vertices while preserving the distance relations needed for the packing conditions. The skeleton is naturally decomposed into triangle and diamond blocks. By contracting these blocks, we obtain the core graph $C_G$, which records how the local triangle structures of $G$ interact at a global level. In this way, the skeleton captures the relevant local distances, whereas the core captures the global organization of the graph.

The main advantage of this framework is that it transforms the construction of suitable $3$-packings into a matching problem. Given an independent set $I$ of the core graph, we define an auxiliary graph $H_I$ on the vertices of the corresponding blocks of the skeleton. Two vertices are adjacent in $H_I$ precisely when choosing them simultaneously would violate the $3$-packing condition. We prove that every component of $H_I$ has order at most three. We then construct a bipartite graph $B_I$ whose left part represents the blocks indexed by $I$, and whose right part represents the components of $H_I$. Selecting one suitable vertex from each block is therefore equivalent to finding a matching that saturates the left part of $B_I$. Hall's theorem guarantees such a matching and consequently produces a $3$-packing meeting every required block. Thus, the skeleton--core decomposition, together with the auxiliary conflict graph and the Hall-type matching argument, provides a systematic method for converting distance constraints into a tractable combinatorial selection problem.
Beyond the results established in this paper, we believe that the proposed skeleton--core framework provides a versatile approach for studying packing colorings and related distance-constrained coloring problems in claw-free graphs and other graph classes.

The paper is organized as follows. In Section~2, we recall the main tools used in the proofs and introduce the skeleton and core graphs associated with a claw-free subcubic graph. In Section~3, we define the auxiliary graphs associated with an independent set of the core graph and use Hall's theorem to construct a $3$-packing meeting every corresponding block of the skeleton. Section~4 is devoted to the proof that every claw-free subcubic graph is $(1,1,2,3)$-packing colorable. In Section~5, we adapt the skeleton--core and matching framework to prove that every connected claw-free subcubic graph, except $\mathcal{H}$, is $(1,1,3,3,3)$-packing colorable. Finally, in Section~6, we discuss the sharpness of both results and propose a related open problem. We conclude with remarks and open problems.

\begin{figure}[h!]
\centering
\begin{tikzpicture}[
    node/.style={circle, draw, fill=gray!10, minimum size=1mm},
    edge/.style={thick},
    green edge/.style={thick, draw=green},
    red edge/.style={thick, draw=red},
    black edge/.style={thick, draw=black},
    dotted edge/.style={thick, dotted},
    weight/.style={font=\bfseries\small}
]


\node[node] (1) at (6,0) {};
\node[node] (2) at (8,0) {};
\node[node] (3) at (9,1) {};
\node[node] (4) at (9,3) {};
\node[node] (5) at (8,4) {};
\node[node] (6) at (6,4) {};
\node[node] (7) at (5,3) {};
\node[node] (8) at (5,1) {};

\node[node] (9) at (7,1) {};
\node[node] (10) at (8,2) {};
\node[node] (11) at (7,3) {};
\node[node] (12) at (6,2) {};

\draw (1) -- (2);
\draw (2) -- (3);
\draw (3) -- (4);
\draw (4) -- (5);
\draw (5) -- (6);
\draw (6) -- (7);
\draw (7) -- (8);
\draw (8) -- (1);
\draw (1) -- (9);
\draw (2) -- (9);
\draw (3) -- (10);
\draw (4) -- (10);
\draw (5) -- (11);
\draw (6) -- (11);
\draw (7) -- (12);
\draw (8) -- (12);
\draw (9) -- (11);
\draw (10) -- (12);

\node at (7, -1) {$\mathcal{H}$};

\end{tikzpicture}
\caption{The graph $\mathcal{H}$ that is the sole exception of the $(1,1,3,3,3)$-packing colorability of connected claw-free subcubic graphs.}\label{figure_1}
\end{figure}

\section{Preliminaries}
We will use Brooks' theorem and Hall's theorem.

\begin{theorem}[Brooks' theorem]\label{t1}
Let $G$ be a connected graph with maximum degree $\Delta$. Then $\chi(G)\leq \Delta,$ unless $G$ is a complete graph or an odd cycle. Equivalently,
$\chi(G)=\Delta+1$ if and only if $G$ is a complete graph or an odd cycle.
\end{theorem}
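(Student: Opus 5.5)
We prove Brooks' theorem. Trivial cases first. The case $\Delta\le 1$ is immediate: $G$ is $K_1$ or $K_2$, both complete. If $\Delta=2$, then $G$ is a path or a cycle. A path or an even cycle is $2$-colorable, and an odd cycle needs three colors. So assume $\Delta\ge 3$ and that $G$ is not complete. The goal is a proper $\Delta$-coloring, which we obtain by greedy coloring along a carefully chosen vertex order. Greedy coloring succeeds with $\Delta$ colors whenever each vertex, at the moment it is colored, has at most $\Delta-1$ already-colored neighbours, or its colored neighbours repeat a color.

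First, the non-regular case. Suppose some vertex $r$ has $\deg(r)<\Delta$. Take a spanning tree rooted at $r$ (for instance a BFS tree). Order the vertices by non-increasing distance from $r$, so that $r$ comes last. Every vertex $v\neq r$ has its parent later in the order, so at most $\Delta-1$ neighbours precede it. The root $r$ has fewer than $\Delta$ neighbours in total. Greedy coloring therefore uses at most $\Delta$ colors.

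Next, suppose $G$ is $\Delta$-regular and has a cut vertex $x$. Split $G$ at $x$ into pieces $G_1,\dots,G_k$, each consisting of a component of $G-x$ together with $x$. In each $G_i$ the vertex $x$ has degree less than $\Delta$, so each $G_i$ is connected and non-regular. By the previous step each $G_i$ is $\Delta$-colorable. Permute the colors in each piece so that $x$ receives the same color everywhere, and glue the colorings together.

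The main case, and the main obstacle, is when $G$ is $\Delta$-regular and $2$-connected. Here we need a vertex $v$ with two neighbours $a,b$ that are non-adjacent and such that $G-\{a,b\}$ is connected.
\begin{itemize}
\item Since $G$ is connected, regular and not complete, some vertex $v$ has two non-adjacent neighbours.
\item If $G$ is $3$-connected, any such $v$ with non-adjacent neighbours $a,b$ works, because $G-\{a,b\}$ is connected.
\item Otherwise $G$ has a $2$-cut $\{v,w\}$ for some $w$, so $G-v$ is connected but has a cut vertex. Then the block structure of $G-v$ has at least two leaf blocks. By $2$-connectivity of $G$, the vertex $v$ has a neighbour in the interior of each leaf block, that is, a neighbour which is not a cut vertex of $G-v$. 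Pick such neighbours $a,b$ in two different leaf blocks.
\item Then $a$ and $b$ are non-adjacent, since they lie in the interiors of different blocks. Also $G-\{a,b\}$ is connected: each of $G-v-a$ and $G-v-b$ stays connected, and $v$ still has another neighbour because $\Delta\ge3$.
\end{itemize}

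Now color $a$ and $b$ first with color $1$. Order the remaining vertices of $G-\{a,b\}$ by non-increasing distance from $v$ in a spanning tree of $G-\{a,b\}$, with $v$ last. Every vertex other than $v$ has a later neighbour (its parent), so at most $\Delta-1$ of its neighbours precede it and a color remains free. When $v$ is colored, its $\Delta$ neighbours are all colored, but two of them ($a$ and $b$) share color $1$. Hence at most $\Delta-1$ colors appear among them, and $v$ can be colored.

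This gives $\chi(G)\le\Delta$ in all non-exceptional cases. The equivalent formulation follows from two facts. First, $\chi(G)\le\Delta+1$ always holds by greedy coloring. Second, $K_{\Delta+1}$ and odd cycles attain $\chi=\Delta+1$.
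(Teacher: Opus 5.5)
The paper gives no proof of this statement. Brooks' theorem is quoted as a classical tool and used only in Lemma~\ref{l8}, to get $\chi(C_G)\le 3$ from $\Delta(C_G)\le 3$ and $C_G\neq K_4$. So there is nothing in the paper to compare against. Your proposal is essentially Lov\'asz's standard proof: greedy colouring along a reversed spanning-tree order, splitting at a cut vertex, and, in the $2$-connected regular case, choosing $v$ with non-adjacent neighbours $a,b$ such that $G-\{a,b\}$ is connected. It is correct, apart from one justification that needs repair.

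You argue that $G-\{a,b\}$ is connected because $G-v-a$ and $G-v-b$ are each connected. That inference is invalid in general. In $C_4$, deleting either of two opposite vertices leaves a connected graph, but deleting both does not. What you need is that $G-v-a-b$ is connected, and this follows from the leaf-block structure.

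Let $B_1,B_2$ be the two leaf blocks containing $a$ and $b$, with cut vertices $c_1,c_2$. Since $B_i$ is $2$-connected or a $K_2$, the graphs $B_1-a$ and $B_2-b$ are connected and contain $c_1$ and $c_2$ respectively. Also $b\notin B_1$ and $a\notin B_2$.

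Now take any vertex of $G-v$ that is not a non-cut vertex of $B_1$ or $B_2$. A shortest path in $G-v$ from it to $c_1$ avoids $a$ and $b$. It cannot enter $B_1-c_1$ before reaching $c_1$, because $c_1$ is the only entrance. It cannot pass through $B_2-c_2$, because it would have to enter and leave through $c_2$, which a path cannot do.

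Hence $G-v-a-b$ is connected. Since $\deg v=\Delta\ge 3$, $v$ keeps a neighbour outside $\{a,b\}$, so $G-\{a,b\}$ is connected. With this fix, the rest of your argument goes through unchanged.
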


\begin{theorem}[Hall's theorem]\label{t2}
Let $G$ be a bipartite graph with bipartition $(X,Y)$. Then, $G$ has a matching that saturates every vertex of $X$ if and only if
$|N(A)|\ge |A|$ for every subset $A\subseteq X$.
\end{theorem}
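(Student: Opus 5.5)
Necessity is immediate. If $M$ is a matching saturating $X$, then for any $A\subseteq X$ the $M$-partners of the vertices of $A$ are $|A|$ distinct vertices lying in $N(A)$, so $|N(A)|\ge |A|$. All the work is in sufficiency, which I will prove by induction on $|X|$ (the Halmos--Vaughan argument), splitting on whether Hall's condition holds with slack.

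For $|X|=0$ there is nothing to prove, and for $|X|=1$ the condition gives the single vertex a neighbour. Assume $|X|\ge 2$ and that the statement holds for all bipartite graphs with a smaller left side. I distinguish two cases.

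\emph{Case 1: every nonempty proper subset has slack.} Suppose $|N(A)|\ge |A|+1$ for every nonempty $A\subsetneq X$. Pick any $x\in X$ and any neighbour $y$ of $x$, which exists by the condition for $A=\{x\}$. Delete $x$ and $y$. For any nonempty $A\subseteq X\setminus\{x\}$, the neighbourhood of $A$ in the smaller graph has size at least $|N(A)|-1\ge |A|$. So Hall's condition holds there, and induction gives a matching saturating $X\setminus\{x\}$. Adding the edge $xy$ completes it.

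\emph{Case 2: some nonempty proper subset is tight.} Suppose $|N(A_0)|=|A_0|$ for some nonempty $A_0\subsetneq X$.
\begin{itemize}
\item First apply induction to the subgraph induced by $A_0\cup N(A_0)$. Hall's condition holds there, because every $A\subseteq A_0$ has all its neighbours inside $N(A_0)$. This yields a matching $M_1$ saturating $A_0$, and $M_1$ uses exactly all of $N(A_0)$.
\item Next consider the subgraph $G'$ induced by $(X\setminus A_0)\cup (Y\setminus N(A_0))$. For $B\subseteq X\setminus A_0$, its neighbourhood in $G'$ is $N(B\cup A_0)\setminus N(A_0)$. Hence
\[
|N_{G'}(B)| = |N(B\cup A_0)|-|N(A_0)| \ge |B|+|A_0|-|A_0| = |B|.
\]
\item By induction, since $|X\setminus A_0|<|X|$, the graph $G'$ has a matching $M_2$ saturating $X\setminus A_0$.
\end{itemize}
The matchings $M_1$ and $M_2$ are vertex-disjoint, so $M_1\cup M_2$ saturates $X$.

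The only step needing care is Case 2: the right side of $G'$ must exclude all of $N(A_0)$, and the displayed computation must apply Hall's condition to $B\cup A_0$ rather than to $B$. The rest is routine.
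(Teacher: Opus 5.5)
Your proof is correct. It is the standard Halmos--Vaughan induction, and you handle the one delicate step properly: in Case 2 you apply the hypothesis to the disjoint union $B\cup A_0$ and use $N(A_0)\subseteq N(B\cup A_0)$ to get $|N_{G'}(B)|=|N(B\cup A_0)|-|N(A_0)|\ge |B|$.

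The paper gives no proof of this statement. It quotes Hall's theorem as a classical tool and uses only the sufficiency direction, in Lemma~\ref{l4}. There the condition is first checked in Lemma~\ref{l3} by the count $|V_i|\ge 3$ against $|V(C_j)|\le 3$. So there is no competing route to compare; your argument simply makes that black box self-contained.

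Two small remarks. The separate base case $|X|=1$ is redundant, since Case 1 covers it vacuously. Likewise, the observation that $M_1$ exhausts $N(A_0)$ is not needed: $M_1$ and $M_2$ are vertex-disjoint because they live on disjoint vertex sets.
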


Throughout this section, we consider connected claw-free subcubic graphs with minimum degree at least $2$ and containing a $3$-vertex. This assumption causes no loss of generality for the proofs of our main results: cycles clearly admit the required $S$-packing colorings, while any pendant path can be colored after the remaining graph has been colored. In particular, every graph considered below contains a triangle, since every $3$-vertex in a claw-free graph belongs to a triangle.

Let $G$ be a graph as above such that $G\neq K_4$. A vertex of $G$ that is not contained in a triangle is called a \textit{bad vertex}. Since every $3$-vertex in a claw-free graph belongs to a triangle, every bad vertex has degree $2$. A \textit{bad path} is a path whose internal vertices are bad and whose end vertices are non-bad. The \textit{skeleton} $S_G$ of $G$ is the graph obtained by replacing every bad path by a single edge joining its end vertices (see Figure \ref{Figure_example}). Observe that the triangles in $G$ are those in $S_G$. Moreover, every vertex in $S_G$ is contained in a triangle and no more than two triangles. A vertex in $S_G$ is said to be \textit{heavy} if it is contained in two triangles, and \textit{light} otherwise. For a heavy (resp., light) vertex $x$ in $S_G$, we denote by $R_x^1$ and $R_x^2$ (resp., $R_x$) the triangles (resp., triangle) in $S_G$ that contain (resp., contains) $x$.

\begin{figure}[h!]
\centering
\begin{tikzpicture}[
    node/.style={circle, draw, fill=gray!10, minimum size=1mm},
    snode/.style={rectangle, draw, fill=gray!10, minimum size=1mm},
    edge/.style={thick},
    green edge/.style={thick, draw=green},
    red edge/.style={thick, draw=red},
    black edge/.style={thick, draw=black},
    dotted edge/.style={thick, dotted},
    weight/.style={font=\bfseries\small}
]


\node[node] (v11) at (0,0) {};
\node[node] (v12) at (0.5,0.8) {};
\node[node] (v13) at (-0.5,0.8) {};

\node[node,color={red},fill=red!10] (a1) at (0.7,0) {};
\node[node,color={red},fill=red!10] (a2) at (1.4,0) {};
\node[node,color={red},fill=red!10] (a3) at (2.1,0) {};

\node[node] (v21) at (2.8,0) {};
\node[node] (v22) at (3.6,-0.5) {};
\node[node] (v23) at (3.6,0.5) {};

\node[node,color={red},fill=red!10] (b1) at (3.6,1.2) {};
\node[node,color={red},fill=red!10] (b2) at (3.6,1.9) {};

\node[node] (v31) at (3.6,2.7) {};
\node[node] (v32) at (2.8,3.2) {};
\node[node] (v33) at (2,2.7) {};
\node[node] (v34) at (2.8,2.2) {};

\node[node,color={red},fill=red!10] (c1) at (1,2.7) {};

\node[node] (v41) at (0,2.7) {};
\node[node] (v42) at (-0.5,1.9) {};
\node[node] (v43) at (0.5,1.9) {};

\node[node] (v51) at (3.6,-1.2) {};
\node[node] (v52) at (2.8,-1.7) {};
\node[node] (v53) at (3.6,-2.2) {};

\draw (v11) -- (v12);
\draw (v12) -- (v13);
\draw (v13) -- (v11);

\draw (v21) -- (v22);
\draw (v22) -- (v23);
\draw (v23) -- (v21);

\draw (v31) -- (v32);
\draw (v32) -- (v33);
\draw (v33) -- (v34);
\draw (v34) -- (v31);
\draw (v34) -- (v32);

\draw (v41) -- (v42);
\draw (v42) -- (v43);
\draw (v43) -- (v41);

\draw (v51) -- (v52);
\draw (v52) -- (v53);
\draw (v53) -- (v51);

\draw (v11) -- (a1);
\draw (a1) -- (a2);
\draw (a2) -- (a3);
\draw (a3) -- (v21);

\draw (v23) -- (b1);
\draw (b1) -- (b2);
\draw (b2) -- (v31);

\draw (v33) -- (c1);
\draw (c1) -- (v41);

\draw (v42) -- (v13);
\draw (v43) -- (v12);

\draw (v22) -- (v51);

\node at (2, -2.5) {$G$};


\begin{scope}[xshift=6.5cm]

\node[node] (v11) at (0,0) {};
\node[node] (v12) at (0.5,0.8) {};
\node[node] (v13) at (-0.5,0.8) {};

\node[node] (v21) at (2.8,0) {};
\node[node] (v22) at (3.6,-0.5) {};
\node[node] (v23) at (3.6,0.5) {};

\node[node] (v31) at (3.6,2.7) {};
\node[node] (v32) at (2.8,3.2) {};
\node[node] (v33) at (2,2.7) {};
\node[node] (v34) at (2.8,2.2) {};

\node[node] (v41) at (0,2.7) {};
\node[node] (v42) at (-0.5,1.9) {};
\node[node] (v43) at (0.5,1.9) {};

\node[node] (v51) at (3.6,-1.2) {};
\node[node] (v52) at (2.8,-1.7) {};
\node[node] (v53) at (3.6,-2.2) {};

\draw (v11) -- (v12);
\draw (v12) -- (v13);
\draw (v13) -- (v11);

\draw (v21) -- (v22);
\draw (v22) -- (v23);
\draw (v23) -- (v21);

\draw (v31) -- (v32);
\draw (v32) -- (v33);
\draw (v33) -- (v34);
\draw (v34) -- (v31);
\draw (v34) -- (v32);

\draw (v41) -- (v42);
\draw (v42) -- (v43);
\draw (v43) -- (v41);

\draw (v51) -- (v52);
\draw (v52) -- (v53);
\draw (v53) -- (v51);

\draw (v11) -- (v21);

\draw (v23) -- (v31);

\draw (v33) -- (v41);

\draw (v42) -- (v13);
\draw (v43) -- (v12);

\draw (v22) -- (v51);

\node at (2,-2.5) {$S_G$};

\end{scope}

\begin{scope}[xshift=12cm]
    \node[snode] (v1) at (0,0){};
    \node[snode] (v2) at (1,0){};
    \node[snode] (v3) at (1,1){};
    \node[snode] (v4) at (0,1){};
    \node[snode] (v5) at (1,-1){};

    \draw (v1) -- (v2);
    \draw (v2) -- (v3);
    \draw (v3) -- (v4);
    \draw (v4) -- (v1);
    \draw (v2) -- (v5);

    \node at (0.5,-2.5) {$C_G$};
\end{scope}

\end{tikzpicture}
\caption{A claw-free subcubic graph \(G\), its skeleton \(S_G\), obtained by replacing each bad path (whose internal vertices are shown in red) with an edge, and the corresponding core \(C_G\), obtained by contracting every triangle or diamond of \(S_G\) into a single vertex.}\label{Figure_example}
\end{figure}

\begin{remark}\label{r1}
    For every two vertices $x,y\in S_G$, we have $d_{G}(x,y)\geq d_{S_G}(x,y)$.
\end{remark}

Observe that the vertex set of $S_G$ can be partitioned into $V_1,\dots ,V_k$ such that $S_G[V_i]$ is either a triangle or a diamond ($K_4-e$). The \textit{core} graph of $G$, denoted by $C_G$, is the graph obtained from $S_G$ by contracting every set of vertices among $V_1,\dots ,V_k$ to a vertex (see Figure \ref{Figure_example}). That is, $V(C_G)=\{v_1,\dots ,v_k\}$ and $v_iv_j\in E(C_G)$ if and only if $i\neq j$ and there exist $a\in V_i$ and $b\in V_j$ such that $ab\in E(S_G)$.

\begin{remark}\label{r2}
    Let $i\in \{1,\dots ,k\}$ and $x\in V_i$. As $S_G$ is subcubic, the following hold:\begin{itemize}
        \item[(i)] If $x$ is heavy, then $x$ has no neighbors outside $V_i$;
        \item[(ii)] If $x$ is a light $2$-vertex, then $x$ has no neighbors outside $V_i$; and
        \item[(iii)] If $x$ is a light $3$-vertex, then $x$ has a unique neighbor outside $V_i$.
    \end{itemize}
\end{remark}

Let $R$ and $S$ be two triangles in $S_G$. A pair $(R,S)$ is said to be an \textit{adjacent pair} if $R$ and $S$ have two common vertices; that is, there exist two heavy vertices $x$ and $y$ such that $R_x^1=R_y^1=R$ and $R_x^2=R_y^2=S$. Otherwise, $(R,S)$ is said to be a \emph{non-adjacent pair}. Let $(R,S)$ be a non-adjacent pair. We say that $(R,S)$ is a \emph{close pair} if a vertex of $R$ is adjacent to a vertex of $S$.

\section{A good $3$-packing}
The main goal of this section is to construct a $3$-packing that intersects every block corresponding to an independent set of the core graph. We begin by establishing structural properties of such independent sets in the skeleton graph. These properties allow us to transform the problem of selecting one suitable representative from each block into a matching problem in an auxiliary bipartite graph, which is then solved using Hall's theorem.
Throughout this section, let $G$ be a connected claw-free subcubic graph such that $G\neq K_4$, and let $S_G$ and $C_G$ denote its skeleton graph and its core graph, respectively. Recall that the vertex set of $S_G$ is partitioned into sets $V_1,\ldots,V_k$, where each induced graph $S_G[V_i]$ is either a triangle or a diamond,
and that the vertices of $C_G$ are denoted by $v_1,\ldots,v_k$, where each $v_i$ corresponds to the set $V_i$.
Suppose $I=\{v_1,\ldots,v_s\}$ is an independent set in $C_G$.

\begin{lemma}\label{l1}
    Let $v_i,v_j\in I$ be distinct vertices. If $x\in V_i$ and $y\in V_j$, then $d_{S_G}(x,y)\geq 3$. Moreover, equality holds if and only if there exists $i_0\in \{s+1,\dots ,k\}$ such that $S_G[V_{i_0}]$ is a triangle containing a neighbor of $x$ and a neighbor of $y$.
\end{lemma}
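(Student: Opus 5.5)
We argue directly from the block structure of $S_G$ and Remark~\ref{r2}. Since $v_i$ and $v_j$ are non-adjacent in $C_G$, no edge of $S_G$ joins $V_i$ and $V_j$. Hence $d_{S_G}(x,y)\ge 2$, and it suffices to rule out distance $2$. Suppose $x$ and $y$ have a common neighbour $z$. Because $V_1,\dots,V_k$ partition $V(S_G)$, we have $z\in V_{i_0}$ for some $i_0$.

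If $i_0=i$, then $z$ and $y$ are adjacent vertices lying in different blocks, which makes $v_i$ and $v_j$ adjacent in $C_G$, a contradiction. The case $i_0=j$ is symmetric. So $i_0\notin\{i,j\}$, and then $z$ has at least two neighbours ($x$ and $y$) outside its own block $V_{i_0}$. This contradicts Remark~\ref{r2}, since every vertex of $S_G$ has at most one neighbour outside its block. Therefore $d_{S_G}(x,y)\ge 3$.

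For the equality characterization, consider first the reverse direction. Suppose $S_G[V_{i_0}]$ is a triangle containing a neighbour $a$ of $x$ and a neighbour $b$ of $y$. The points $a$ and $b$ are distinct: each has at most one outside neighbour, and $x\ne y$. So $x\,a\,b\,y$ is a walk of length $3$, and combined with the lower bound this gives equality. We also need $i_0\in\{s+1,\dots,k\}$. Indeed, $V_{i_0}$ contains a neighbour of $x$ in another block, so $v_{i_0}$ is adjacent to $v_i$ in $C_G$; since $I$ is independent, $v_{i_0}\notin I$.

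For the forward direction, take a path $x\,a\,b\,y$ of length $3$. We locate the blocks of $a$ and $b$ case by case.
\begin{itemize}
\item If $a\in V_i$, then $b\notin V_i$: otherwise the edge $by$ would join $V_i$ and $V_j$. Then $b$ is a neighbour of $a$ outside $V_i$ and $y$ is a neighbour of $b$ outside $b$'s block. So $b\in V_j$, because $y\in V_j$ is then not outside $b$'s block. But now $ab$ is an edge between $V_i$ and $V_j$, a contradiction.
\item By symmetry, $b\notin V_j$.
\item Hence $a\notin V_i$ and $b\notin V_j$. Then $x$ is $a$'s unique outside neighbour, so $b$ lies in $a$'s block; symmetrically $a$ lies in $b$'s block. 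Thus $a,b\in V_{i_0}$ with $i_0\ne i,j$, and $v_{i_0}\notin I$ as before.
\end{itemize}

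The main obstacle is showing that $V_{i_0}$ is a triangle rather than a diamond. Each of $a$ and $b$ has an outside neighbour, so by Remark~\ref{r2}(i) both are light $3$-vertices, and by (ii) they are not light $2$-vertices. In a diamond, the two degree-$3$ (within the diamond) vertices are heavy. The two degree-$2$ vertices are each light and can have one outside neighbour, but they are non-adjacent, so they cannot be the adjacent pair $a,b$. Thus an edge $ab$ of a diamond always has a heavy endpoint, which is impossible, and $S_G[V_{i_0}]$ must be a triangle. This completes the plan.
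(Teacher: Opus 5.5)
Your proof is correct and follows essentially the same route as the paper's. Both arguments use the fact from Remark~\ref{r2} that every vertex of $S_G$ has at most one neighbour outside its block, which forces the two internal vertices of a length-$3$ path into a common block $V_{i_0}$ with $v_{i_0}\notin I$, and then use that both are light $3$-vertices to rule out a diamond. Your case analysis is slightly more careful than the paper's: you explicitly handle the possibility that the second vertex of the path lies in $V_i$, which the paper passes over, and you spell out the converse direction of the equivalence.
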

\begin{proof}
    Suppose that $x\in V_i$ and $y\in V_j$. Let $u_1\cdots u_{\ell}$ be an $xy$-path in $S_G$ of minimum length, where $u_1=x$ and $u_{\ell}=y$. As $I$ is independent in $C_G$, no vertex of $V_i$ is adjacent to a vertex in $V_j$. Then, $\ell\geq 3$. Hence, there exists $i_0\in \{s+1,\dots ,k\}$ such that $u_2\in V_{i_0}$. As $u_2$ has a unique neighbor outside $V_{i_0}$, it follows that $u_3\in V_{i_0}$. Since $u_2\notin V_i$ and $u_1\in V_i$, the vertex $u_2$ has a neighbor outside $V_{i_0}$. Hence, by Remark~\ref{r2}, $u_2$ is a light $3$-vertex and therefore has a unique neighbor outside $V_{i_0}$, namely $u_1$. Consequently, the next vertex on the path satisfies $u_3\in V_{i_0}$. Since $i_0\in \{s+1,\dots ,k\}$, then $i_0\neq j$. Therefore, $\ell\geq 4$ and $d_{S_G}(x,y)\geq 3$. Moreover, the equality occurs if and only if $\ell=4$, that is, $y$ is adjacent to $u_3$. By Remark \ref{r2}, $u_3$ is a light vertex. Now, as $u_2$ and $u_3$ are light vertices, it follows that $S_G[V_{i_0}]$ is a triangle. The result follows.
\end{proof}

Although vertices belonging to different blocks of an independent set are sufficiently separated, some pairs remain at distance exactly three. Since such pairs cannot simultaneously belong to a 3-packing, we encode these conflicts by introducing the following notion. For $v_i\in I$ and $x\in V_i$, a vertex $y\in V(S_G)$ is said to be \textit{$I$-perfectly close} to $x$ if there exists $v_j\in I$ such that $y\in V_j$ and $d_{S_G}(x,y)=3$.

\begin{remark}\label{r3}
    Let $v_i\in I$ and $x\in V_i$. A vertex $y$ is $I$-perfectly close to $x$ if and only if the following hold:\begin{itemize}
        \item[(i)] $x$ and $y$ are light $3$-vertices,
        \item[(ii)] There exists $v_j\in I$ such that $y\in V_j$, and 
        \item[(iii)] There exists $v_{\ell}\in V(C_G)\setminus I$ such that $S_G[V_{\ell}]$ is a triangle and both $x$ and $y$ have a neighbor (in $S_G$) in $V_{\ell}$.
    \end{itemize}
\end{remark}

The relation of being $I$-perfectly close naturally defines a conflict graph on the candidate vertices. Construct the graph $H_I$ such that $V(H_I)=\bigcup_{v_i\in I} V_i$ and $xy\in E(H_I)$ if and only if $x$ and $y$ are $I$-perfectly close.

\begin{remark}\label{r4}
    By Remark \ref{r3}, we have $\Delta(H_I)\leq 2$.
\end{remark}

\begin{lemma}\label{l2}
    Every component of $H_I$ is of order at most $3$.
\end{lemma}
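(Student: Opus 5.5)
We want to show that every component of $H_I$ has at most three vertices. Since $\Delta(H_I)\le 2$ by Remark~\ref{r4}, each component of $H_I$ is a path or a cycle. It therefore suffices to rule out a path on four vertices $x_1x_2x_3x_4$ in $H_I$. A component with at least four vertices, whether a path or a cycle, contains such a path.

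By Remark~\ref{r3}, every vertex of $H_I$ of positive degree is a light $3$-vertex lying in some $V_i$ with $v_i\in I$. By Remark~\ref{r2}(iii), each such vertex $x$ has a unique neighbor $x'$ outside its block. Each edge $xy$ of $H_I$ is witnessed by a triangle block $V_\ell$ with $v_\ell\notin I$ that contains both $x'$ and $y'$. Since $x'$ is unique, this witnessing block is determined by $x$ alone: it is the block containing $x'$. Call it $T(x)$. Then every edge $xy$ of $H_I$ satisfies $T(x)=T(y)$.

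Consequently, along the path $x_1x_2x_3x_4$ the witness blocks all coincide: $T(x_1)=T(x_2)=T(x_3)=T(x_4)=T$, where $S_G[T]$ is a triangle. The vertices $x_1,\dots,x_4$ are distinct. Each $x_r'$ is a vertex of $T$ and has $x_r$ as its unique neighbor outside $T$. Indeed, $x_r'$ is adjacent to $x_r\notin T$, so $x_r'$ is a light $3$-vertex of $T$ by Remark~\ref{r2}, and its outside neighbor is unique.

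Hence the map $x_r\mapsto x_r'$ is injective. For if $x_r'=x_t'$, then $x_r$ and $x_t$ are both the unique outside neighbor of the same vertex, so $x_r=x_t$. This gives four distinct vertices in the triangle $T$, which has only three vertices, a contradiction.

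The same argument shows more: the set of vertices of $H_I$ sharing a common witness $T$ has size at most $3$. Since every component of $H_I$ with an edge lies inside one such set, every component has order at most $3$.

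The only delicate step is justifying that the witness triangle is determined by each endpoint individually. This uses the uniqueness of the outside neighbor of a light $3$-vertex, from Remark~\ref{r2}(iii), applied both to $x$ and to $x'$. It also needs the remark that $x'\notin V_i$, since $x'$ lies in $V_\ell$ with $\ell\ne i$. Once that is in place, the statement reduces to the pigeonhole count in a triangle.
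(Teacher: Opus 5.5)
Your proof is correct and follows essentially the paper's argument. Both rest on the same observation: the witness triangle of an $H_I$-edge is the block containing each endpoint's unique outside neighbor, and that triangle has only three vertices, each with a unique outside neighbor, so at most three vertices of $H_I$ can share it. Your injectivity formulation $x\mapsto x'$ makes explicit the step the paper leaves implicit, namely that the witness triangle is the same throughout a component, and as you note it does not even need $\Delta(H_I)\le 2$.
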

\begin{proof}
  Suppose that a component $C$ of $H_I$ contains at least three vertices. Then, $C$ contains a vertex $x$ of degree at least $2$. Let $y$ and $z$ be the neighbors of $x$ in $H_I$. By Remark \ref{r3}, we may suppose that $x\in V_1$, $y\in V_3$, $z\in V_4$, $S_G[V_2]$ is a triangle, and $x$, $y$, and $z$ have neighbors (in $S_G$) in $V_2$. Now, $x$, $y$, and $z$ have no neighbors in any $V_i$, for every $i\geq 5$. Then, $C$ contains only $x$, $y$, and $z$. Indeed, the vertices $y$ and $z$ are light $3$-vertices. Since each of them has a unique neighbor outside its own block, and this neighbor already lies on the path joining $x$ to the corresponding block, neither $y$ nor $z$ can be perfectly close to a vertex belonging to another block. Hence $C$ contains no vertex other than $x$, $y$, and $z$. The result follows.
\end{proof}

Let $C_1,\dots ,C_m$ be all components of $H_I$. Our objective is to select exactly one vertex from every set $V_i$ while avoiding conflicts represented by $H_I$. Since vertices belonging to the same component of $H_I$ cannot all be chosen independently, we transform this selection problem into a bipartite matching problem. Construct the bipartite graph $B_I$ with bipartition $U:=\{u_1,\dots ,u_s\}$ and $C:=\{c_1,\dots ,c_m\}$ such that $u_ic_j\in E(B_I)$ if and only if $V_i$ contains a vertex in $C_j$.

\begin{lemma}\label{l3}
For every subset $A\subseteq U$, we have $|N_{B_I}(A)|\ge |A|.$
\end{lemma}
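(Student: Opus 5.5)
Our plan is a double-counting argument in $B_I$. Every block has at least three vertices, and every component of $H_I$ has at most three vertices. We will show that each component $c_j$ "absorbs" at most one unit of the demand of the blocks it touches.

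\textbf{Step 1: vertices per block.} Fix $A\subseteq U$ and let $V_A=\bigcup_{u_i\in A}V_i$. Each $V_i$ induces a triangle or a diamond, so $|V_i|\ge 3$ and hence $|V_A|\ge 3|A|$. Every vertex of $V_A$ lies in some component of $H_I$. A component $C_j$ containing such a vertex is adjacent in $B_I$ to the corresponding $u_i$, so $C_j$ belongs to $N_{B_I}(A)$. Hence
\[ V_A\subseteq \bigcup_{c_j\in N_{B_I}(A)} V(C_j). \]

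\textbf{Step 2: components per vertex.} By Lemma~\ref{l2}, each $C_j$ has at most three vertices. This gives
\[ 3|A|\le |V_A|\le \sum_{c_j\in N_{B_I}(A)}|V(C_j)|\le 3|N_{B_I}(A)|, \]
and therefore $|N_{B_I}(A)|\ge |A|$.

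\textbf{Possible gap.} The only thing to double-check is whether a component could contain two vertices of the same block, or vertices outside $\bigcup_{v_i\in I}V_i$. The second cannot happen, since $V(H_I)=\bigcup_{v_i\in I}V_i$ by definition. The first does not affect the count: the inequality only uses $|V(C_j)|\le 3$ together with the covering of $V_A$.

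We therefore expect no real obstacle. The key input is simply Lemma~\ref{l2} combined with $|V_i|\ge 3$. If a sharper statement were needed, one could also note that by Remark~\ref{r3} only light $3$-vertices have edges in $H_I$. Each block has at most two such vertices in the diamond case and at most three in the triangle case, so many components are isolated vertices, which only helps the count.
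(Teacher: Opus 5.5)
Your proof is correct and is essentially identical to the paper's: both cover $\bigcup_{u_i\in A}V_i$ by the components indexed by $N_{B_I}(A)$, then double count using $|V_i|\ge 3$ and Lemma~\ref{l2} to get $3|A|\le 3|N_{B_I}(A)|$. Your observation that a component containing two vertices of the same block is harmless is also right, because the argument only uses the covering and the bound $|V(C_j)|\le 3$.
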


\begin{proof}
Let $A\subseteq U$. For each vertex $u_i\in U$, since $S_G[V_i]$ is either a triangle or a diamond, we have
$|V_i|\geq 3.$ Moreover, the sets $V_i$ are pairwise disjoint. Hence, \begin{equation}\label{eq1}
    \left|\bigcup_{u_i\in A}V_i\right|=\sum_{u_i\in A}|V_i|\geq 3|A|.
\end{equation}

By the definition of the bipartite graph $B_I$, every vertex of $\bigcup_{u_i\in A}V_i$
belongs to a component of $H_I$ corresponding to a vertex of $N_{B_I}(A)$. Therefore, 
$$\bigcup_{u_i\in A}V_i \subseteq \bigcup_{c_j\in N_{B_I}(A)}V(C_j).$$
Since every component of $H_I$ has order at most $3$ by Lemma~\ref{l2}, we obtain
\begin{equation}\label{eq2}
    \left|\bigcup_{u_i\in A}V_i\right|\leq \sum_{c_j\in N_{B_I}(A)}|V(C_j)|\leq3|N_{B_I}(A)|.
\end{equation}
Then, by (\ref{eq1}) and (\ref{eq2}), we obtain
$$3|A|\leq 3|N_{B_I}(A)|.$$
Hence, 
$$|A|\leq |N_{B_I}(A)|.$$
This completes the proof.
\end{proof}

Now, we are ready to use Hall's theorem to find a good $3$-packing in $S_G$.
\begin{lemma}\label{l4}
    There exists a $3$-packing $Y$ in $S_G$ such that $|Y\cap V_i|=1$ for all $v_i\in I$.
\end{lemma}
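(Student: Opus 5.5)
By Lemma~\ref{l3}, Hall's condition holds in $B_I$. Hence Theorem~\ref{t2} gives a matching $M$ in $B_I$ that saturates $U$. For each $u_i\in U$, let $c_{j(i)}$ be its partner in $M$. By the definition of $B_I$, $V_i$ contains a vertex of $C_{j(i)}$; we fix one such vertex and call it $y_i\in V_i\cap V(C_{j(i)})$. We then set $Y=\{y_1,\dots,y_s\}$. Since the sets $V_i$ are pairwise disjoint and we pick exactly one vertex from each $V_i$ with $v_i\in I$, we get $|Y\cap V_i|=1$ for every $v_i\in I$. It remains to check that $Y$ is a $3$-packing in $S_G$, that is, that any two distinct vertices of $Y$ are at distance greater than $3$ in $S_G$.

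Let $y_i,y_j\in Y$ with $i\neq j$, so $y_i\in V_i$ and $y_j\in V_j$ with $v_i,v_j\in I$ distinct. By Lemma~\ref{l1}, $d_{S_G}(y_i,y_j)\ge 3$. Suppose equality held. Then $y_j$ would be $I$-perfectly close to $y_i$ by definition, so $y_iy_j\in E(H_I)$. In particular $y_i$ and $y_j$ would lie in the same component of $H_I$, forcing $c_{j(i)}=c_{j(j)}$. Since $u_i\neq u_j$ and $M$ is a matching, this is impossible. Therefore $d_{S_G}(y_i,y_j)\ge 4>3$, and $Y$ is a $3$-packing.

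The main point to be careful about is that a component of $H_I$ could contain vertices from several different blocks. Distinct blocks must then draw their representatives from distinct components, which is exactly what the matching enforces. Conflicts only occur along edges of $H_I$, which never join vertices in different components. So choosing one vertex per component, from distinct components, rules out every distance-$3$ pair. No further argument beyond Lemmas~\ref{l1} and~\ref{l3} should be needed.
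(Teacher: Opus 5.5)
Your proof is correct and follows the paper's argument: Lemma~\ref{l3} and Hall's theorem give a matching saturating $U$, you pick a representative of each $V_i$ inside its matched component of $H_I$, and Lemma~\ref{l1} reduces the $3$-packing condition to the absence of $I$-perfectly close pairs, which the matching excludes. You spell out the equality case more explicitly than the paper does (distance exactly $3$ gives an edge of $H_I$, hence a shared component, contradicting that $M$ is a matching), but the route is the same.
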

\begin{proof}
    By Lemma~\ref{l3} and Hall's theorem, there exists a matching $M$ saturating U. For each $u_i\in U$, let $c_j$ be the vertex of $C$ matched to $u_i$ by $M$. Since $u_ic_j\in E(B_I)$, a vertex of  $V_i$ is contained in $C_j$. Let $x_i\in V_i\cap V(C_j)$ and $Y=\{x_1,\dots ,x_s\}$. Since $M$ is a matching, no two vertices in $Y$ belong to the same component in $H_I$. Thus, the vertices in $Y$ are not $I$-perfectly close. Therefore, by Lemma \ref{l1}, $Y$ is a $3$-packing. By the definition of $Y$, we have $|Y\cap V_i|=1$ for all $v_i\in I$.
\end{proof}

\section{A $(1,1,2,3)$-packing coloring}
In this section, we prove that every claw-free subcubic graph is $(1,1,2,3)$-packing colorable. Let $G$ be a connected claw-free subcubic graph with minimum degree at least $2$ and containing a $3$-vertex such that $G\neq K_4$. We will proceed as follows:\\
\textbf{Step 1.} We find a $2$-packing $X$ in $S_G$ such that the set of triangles in $S_G-X$ corresponds to an independent set $I$ in $C_G$.\\
\textbf{Step 2.} By Lemma \ref{l4}, one can find a $3$-packing $Y$ in $S_G$ that meets every triangle in $S_G-X$.\\
\textbf{Step 3.} We prove that a choice of such a pair of $2$-packing $X$ and a $3$-packing $Y$ in $S_G$ can eliminate every odd cycle in $G$.\\
\textbf{Step 4.} As $X$ (resp., $Y$) is a $2$-packing (resp., $3$-packing) in $G$ also, the graph $G-(X\cup Y)$ can be partitioned into two independent sets $S_1$ and $S_2$. By coloring the vertices of $S_1,S_2,X,Y$ using the colors $1_a,1_b,2,3$ respectively, we obtain a $(1,1,2,3)$-packing coloring of $G$.

\begin{lemma}\label{l5}
Let $x$ and $y$ be two non-adjacent heavy vertices in $S_G$. Then, $d_{S_G}(x,y)\geq 3$.
\end{lemma}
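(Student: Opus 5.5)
Since $x$ and $y$ are non-adjacent, $d_{S_G}(x,y)\geq 2$. So it suffices to rule out a common neighbor $z$ of $x$ and $y$ in $S_G$, and I would argue by contradiction assuming such a $z$ exists. The key facts are these. A heavy vertex lies in two triangles $R_x^1,R_x^2$. Since $S_G$ is subcubic, these two triangles must share an edge at $x$. Thus $x$ has degree $3$, and its closed neighbourhood spans a diamond: the two triangles form a $K_4-e$ in which $x$ is one of the two degree-$3$ vertices. In particular every neighbor of $x$ lies in $R_x^1\cup R_x^2$, and hence in the block $V_i$ containing $x$ (Remark~\ref{r2}(i)). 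The same holds for $y$ and its block $V_j$.

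First I handle the case where $x$ and $y$ lie in different blocks. The common neighbor $z$ would then lie both in $V_i$ and in $V_j$, which is impossible since the blocks are disjoint.

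Next comes the case where $x$ and $y$ lie in the same block $V_i$. If $S_G[V_i]$ is a triangle, then $x$ and $y$ are adjacent, contradicting the hypothesis. So $S_G[V_i]$ is a diamond with vertex set $\{x,a,b,w\}$ in which $x$ is adjacent to $a$, $b$ and $w$. In a diamond the heavy vertices are exactly the two ends of the shared edge, and these are adjacent. Any other heavy vertex $y\in V_i$ would therefore be adjacent to $x$, again a contradiction.

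The main obstacle is to justify carefully that a heavy vertex lies in a diamond that is exactly its block. The danger is two triangles through $x$ that meet only in $x$, which would need degree $4$, so this is excluded by subcubicity. A second danger is a heavy vertex whose diamond extends to a $K_4$. Here I would note that $G\neq K_4$ and $G$ is connected and subcubic, so no $K_4$ component occurs. Any $K_4$ subgraph would be a whole component, because all its vertices already have degree $3$. I would also check that the partition into blocks places both triangles of a heavy vertex in the same block, which follows from the observation defining $V_1,\dots,V_k$.
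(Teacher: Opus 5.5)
Your proof is correct, but it takes a different route from the paper's. The paper argues locally at the common neighbor. It writes $N(x)=\{z,a,b\}$ and $N(y)=\{z,c,d\}$, and notes that heaviness forces two of the three pairs in each neighborhood to be edges. Hence $z$ has a neighbor in $\{a,b\}$ and a neighbor in $\{c,d\}$, and the paper concludes $d(z)\ge 4$. That argument uses only subcubicity and the definition of heavy, not the block partition. As written, though, it assumes that these two neighbors of $z$ are distinct. If they coincide, say $a=c$, then $d(z)=3$ is possible a priori. The contradiction must then be found at $a$: heaviness of $x$ and $y$ forces the edges $ab$ and $ad$, so $a$ is adjacent to $x,y,z,b$. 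Your structural argument avoids that case analysis. You show that $N[x]$ is exactly the diamond block of $x$, and likewise for $y$, so a common neighbor would lie in two disjoint blocks; within one block, the two heavy vertices are adjacent. The cost is that you rely on the block partition and Remark~\ref{r2}(i), which the paper only states as observations.

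One small point: you justify that $S_G$ contains no $K_4$ by $G\neq K_4$, and that is not quite right. Subdividing one edge of $K_4$ gives a claw-free subcubic $G\neq K_4$ with $S_G=K_4$. This does not hurt the lemma, for two reasons. The paper's standing observation that no vertex of $S_G$ lies in more than two triangles rules the case out. And if $S_G=K_4$, the lemma holds vacuously because any two vertices are adjacent.
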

\begin{proof}
Suppose, to the contrary, that $d_{S_G}(x,y)=2$. Then, there exists a vertex $z\in N(x) \cap N(y)$. Since $G$ is subcubic, we may write $N(x)=\{z,a,b\}$ and $N(y)=\{z,c,d\}$. Since $x$ is heavy, $x$ belongs to two triangles. Every triangle containing $x$ corresponds to an edge between two neighbors of $x$. Thus, $z$ is adjacent to $a$ or $b$. Similarly, since $y$ is heavy, we deduce that $z$ is adjacent to $c$ or $d$. Then, we get that $d(z)\ge 4$, a contradiction since $G$ is subcubic.    
\end{proof}

We now consider a set $X \subseteq V(S_G)$ satisfying the following properties:
\begin{enumerate}
    \item for every adjacent pair $(R,S)$ of triangles, exactly one of the two common heavy vertices of $R$ and $S$ belongs to $X$;
    \item $X$ is a $2$-packing;
    \item for every close pair $(R,S)$ of triangles, either $X \cap V(R\cup S)\neq \emptyset$, or every vertex of $R\cup S$ is at distance at most $2$ from some vertex of $X$.
\end{enumerate}
Such a set $X$ will be called a \textit{close $2$-packing}. Clearly, such a close 2-packing exists. For a close $2$-packing $X$, we denote by $\theta(X)$ the number of close pairs in the graph $S_G-X$. Among all close $2$-packings, let $X$ be one minimizing $\theta(X)$. 
\begin{claim}\label{l6}
$\theta(X)=0$.
\end{claim}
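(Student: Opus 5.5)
We argue by contradiction: suppose $\theta(X)>0$, and let $(R,S)$ be a close pair of triangles in $S_G-X$. Since $(R,S)$ is a close pair in $S_G-X$, we have $X\cap V(R\cup S)=\emptyset$. Hence, by property~3, every vertex of $R\cup S$ is at distance at most $2$ from some vertex of $X$. Let $r\in V(R)$ and $s\in V(S)$ be adjacent. As $(R,S)$ is non-adjacent and both $r$ and $s$ have a neighbour outside their own triangle, Remark~\ref{r2} shows that $r$ and $s$ are light $3$-vertices. The plan is to modify $X$ locally, adding one vertex of $R\cup S$ (typically $r$ or $s$) and removing the vertices of $X$ that become too close to it. The resulting set $X'$ should again be a close $2$-packing with $\theta(X')<\theta(X)$, which contradicts the minimality of $\theta(X)$.

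The key steps are as follows. First, describe the vertices of $X$ within distance $2$ of $r$. Because $S_G$ is subcubic and built from triangles and diamonds, there are only few of them: they lie in $R$, in $S$, or in the blocks adjacent to the third vertices of $R$ and $S$. Second, set $X'=(X\setminus N_2[r])\cup\{r\}$, where $N_2[r]$ denotes the set of vertices at distance at most $2$ from $r$. Then $X'$ is a $2$-packing, and the close pair $(R,S)$ is destroyed. Third, check property~1. If a removed vertex $w$ was a common heavy vertex of an adjacent pair, we must replace it by the other common heavy vertex $w'$ of that pair. By Lemma~\ref{l5} and the structure of diamonds, $w'$ is far from the other vertices of $X$ and from $r$. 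We only need to verify that $d(w',r)\ge 3$; if this fails, we choose $s$ instead of $r$. Fourth, check property~3 for every close pair $(R',S')$ that lost its witness. Such a pair lies within bounded distance of $r$, so its vertices are now within distance $2$ of $r$ or of the replacement vertices. When this does not hold, we instead choose to put $r$ into $X'$ and argue that $(R',S')$ then intersects $X'$.

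Finally, we show that $\theta(X')<\theta(X)$. The pair $(R,S)$ no longer lies in $S_G-X'$. Every close pair of $S_G-X'$ that was not a close pair of $S_G-X$ must contain a vertex of $X\setminus X'$, and thus lies near $r$. The main obstacle is this counting step, together with the local case analysis. Removing vertices from $X$ can create new close pairs among the triangles around $r$, so we must choose between $r$ and $s$ (and possibly the third vertices of $R$ and $S$) so that the net change is negative. I would first try to prove that, for a suitable choice among $\{r,s\}$, every removed vertex of $X$ lies in a triangle that also meets $X'$ or becomes adjacent-pair-resolved. Then no new close pair appears, giving $\theta(X')\le\theta(X)-1$, which is the desired contradiction.
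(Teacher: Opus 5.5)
Your overall strategy (insert an endpoint of the edge joining $R$ and $S$ into $X$, delete the vertices of $X$ that become too close, and contradict the minimality of $\theta$) is the same as the paper's. However, the proposal stops exactly where the argument has content, and the auxiliary statement you plan to prove in your last paragraph is false. Write $R=abc$, $S=a'b'c'$ with $r=a$, $s=a'$, and let $x,u$ be the third neighbours of $b,c$. Since $V(R\cup S)\cap X=\emptyset$, the only vertices of $X$ within distance $2$ of $a$ are among $x,u$. These are light $3$-vertices, so no heavy vertex is ever removed and your Step~3 is vacuous.

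Say $x\in X$, $R_x=xyz$, and let $y',z'$ be the third neighbours of $y,z$. As $X$ is a $2$-packing, $y,z\notin X$, so once you delete $x$ and add only $a$, the triangle $R_x$ is disjoint from $X'$. Thus ``the triangle of a removed vertex meets $X'$'' never holds. Now take a configuration with $R_{y'},R_{z'}$ disjoint from $X$, $u\notin X$, and $R_u$ meeting $X$. The swap $(X\setminus\{x\})\cup\{a\}$ destroys only $(R,S)$ but creates the close pairs $(R_x,R_{y'})$ and $(R_x,R_{z'})$, so $\theta$ goes up. Choosing $b$ or $c$ instead does not help, because every vertex of $R$ is within distance $2$ of $x$. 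The mirror configuration around $S$ defeats every choice in $S$ in the same way. So no single vertex of $R\cup S$ works, and your Step~4 fallback (``put $r$ into $X'$'') is circular, since $r$ is already being added.

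The missing idea is compensation inside the triangle of the removed vertex. Suppose $R_{y'}\cap X=\emptyset$. Since $z'\notin X$ automatically ($d(x,z')=2$), $y$ is then at distance at least $3$ from $X\setminus\{x\}$, and the paper takes $X'=(X\setminus\{x\})\cup\{a,y\}$. Now $R_x$ meets $X'$ and no new close pair arises. If instead both $R_{y'}$ and $R_{z'}$ meet $X$, the plain swap $(X\setminus\{x\})\cup\{a\}$ already works. The paper organizes this as Case~1 ($u\notin X$) and Case~2 ($u\in X$), where the same device is applied to $R_u=uvw$ by adding $v$ when needed. Without this step your argument does not yield $\theta(X')<\theta(X)$.
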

\begin{proof}
Suppose, to the contrary, that $\theta(X)>0$. Then, there exists a close pair $(R,S)$ in $S_G-X$. Since no vertex of $R\cup S$ belongs to $X$, it follows from the definition of a close $2$-packing that every vertex of $R\cup S$ is at distance at most $2$ from some vertex of $X$.

Let $R=abc$ and $S=a'b'c'$ such that $aa'\in E(S_G)$. Let $x$ and $u$ be the third neighbors of $b$ and $c$, respectively. Note that at least one of $x$ and $u$ belongs to $X$. Otherwise, $a$ would be at distance at least three from every vertex of $X$, contradicting the fact that every vertex of $R$ is at distance at most two from some vertex of $X$. 

Without loss of generality, assume that $x\in X$. Note that since each vertex of $R$ is light, $x$ and $u$ are both light vertices. Let $R_x=xyz$ be the triangle containing $x$. Since $X$ is a $2$-packing, neither $y$ nor $z$ belongs to $X$. Let $y'$ and $z'$ be the third neighbors of $y$ and $z$, respectively. Note that if $y'=z'$, then $y$ and $z$ are heavy vertices. Hence, we have $y\in X$ or $z\in X$, a contradiction. Then, $y'\neq z'$. Thus, we deduce that  $(R_{y'},R_x)$ and $(R_{z'},R_x)$ are close pairs. We now consider two cases.

\medskip
\noindent
\textbf{Case 1.} $u\notin X$.\\
Here, $a$ is at distance at least $3$ from every vertex of $X\setminus\{x\}$. 
 Suppose that $X\cap R_{y'}\neq\emptyset$ and $X\cap R_{z'}\neq\emptyset$. Let
$X'=(X\setminus\{x\})\cup\{a\}$.
Since $a$ is at distance at least $3$ from every vertex of $X\setminus\{x\}$, the set $X'$ is a close $2$-packing. Moreover, the pair $(R,S)$ is no longer a close pair in $G-X'$ and no new close pair is created. Hence, $\theta(X')<\theta(X),$ contradicting the minimality of $\theta(X)$. Thus, $X\cap R_{y'}=\emptyset$ or $X\cap R_{z'}=\emptyset$.
Without loss of generality, assume that $X\cap R_{y'}=\emptyset$. Since $x\in X$ and $X$ is a $2$-packing, we have $z'\notin X$. Therefore, $y$ is at distance at least three from every vertex of $X\setminus\{x\}$. Let $X'=(X\setminus\{x\})\cup\{y,a\}$.
Then, $X'$ is a close $2$-packing and $(R,S)$ is no longer a close pair in $S_G-X'$. Since no close pair is created, we have $\theta(X')<\theta(X)$, a contradiction.

\medskip
\noindent
\textbf{Case 2.} $u\in X$.
\\
Let $R_u=uvw$ be the triangle containing $u$. Let $v'$ and $w'$ be the third neighbors of $v$ and $w$, respectively. Similarly as above, we deduce that $(R_{v'},R_u)$ and $(R_{w'},R_u)$ are close pairs. 
Suppose that $X\cap R_{\alpha}\neq\emptyset $ for every $\alpha\in\{y',z',v',w'\}$. Let $X'=(X\setminus\{x,u\})\cup\{a\}$.
Then, $X'$ is a close $2$-packing and the pair $(R,S)$ is no longer a close pair in $S_G-X'$. Since $X\cap R_{\alpha}\neq\emptyset $ for every $\alpha\in\{y',z',v',w'\}$, no new close pair is created. Hence, we have $\theta(X')<\theta(X)$, a contradiction.
Otherwise, there exists $\alpha \in\{y',z',v',w'\}$ such that $X\cap R_{\alpha}=\emptyset $.  Without loss of generality, suppose that $\alpha=y'$. Since $x\in X$ and $X$ is a $2$-packing, we have $z'\notin X$. Hence, $y$ is at distance at least three from every vertex of $X\setminus\{x\}$.

Suppose that $R_{v'}\cap X =\emptyset$ or $R_{w'}\cap X =\emptyset$. Without loss of generality, suppose that $R_{v'}\cap X =\emptyset$. Since $u\in X$ and $X$ is a $2$-packing, we have $w'\notin X$. Then, $v$ is at distance at least three from each vertex in $X\setminus \{u\}$.
Let $X'=(X\setminus\{x,u\})\cup\{a,y,v\}.$
Then, $X'$ is a close $2$-packing and $(R,S)$ is no longer a close pair in $S_G-X'$. Since no new close pair is created, we deduce that $\theta(X')<\theta(X)$, a contradiction.

Thus, $R_{v'}\cap X \neq \emptyset$ and $R_{w'}\cap X \neq \emptyset$. Let $X'=(X\setminus\{x,u\})\cup\{a,y\}.$
Then, $X'$ is a close $2$-packing and $(R,S)$ is no longer a close pair in $S_G-X'$. Since no new close pair is created, we deduce that $\theta(X')<\theta(X)$, a contradiction.    
\end{proof}

By Claim \ref{l6}, the triangles in $S_G-X$ are not connected by an edge. Then, the set $I\subseteq V(C_G)$ that corresponds to the triangles in $S_G-X$ is independent. Now, by Lemma \ref{l4}, there exists a $3$-packing $Y$ in $S_G$ that meets every triangle in $S_G-X$.

Let $A$ and $B$ be a $2$-packing and $3$-packing in $G$, respectively. Suppose that $A$ and $B$ are disjoint. If $G-(A\cup B)$ is triangle-free, then the pair $(A,B)$ is said to be a \textit{triangle cover pair}. By the above, there exists a 2-packing $X$ and a 3-packing $Y$ such that $X,Y\subseteq V(S_G)$, $X$ and $Y$ are disjoint, and $S_G-(X\cup Y)$  is triangle-free. Thus, by the construction of $S_G$, we can notice that $G-(X\cup Y)$ is triangle-free and so a triangle cover pair exists in $G$. For a triangle cover pair $(A,B)$ of $G$, let $\phi(A,B)$ be the number of odd cycles in $G-(A\cup B)$. Among all triangle cover pairs, suppose that $(A,B)$ is chosen with minimum $\phi(A,B)$. Throughout the proof, every modification of $(A,B)$ will preserve the defining properties of a triangle cover pair while strictly decreasing this number, yielding the desired contradiction.

\begin{lemma}\label{l7}
    $\phi(A,B)=0$.
\end{lemma}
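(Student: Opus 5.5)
We argue by contradiction. Suppose $\phi(A,B)>0$ and let $C$ be an odd cycle in $G-(A\cup B)$. We will modify $(A,B)$ locally so that it stays a triangle cover pair and the number of odd cycles strictly drops. The first task is to understand the structure of $G-(A\cup B)$. Since $(A,B)$ is a triangle cover pair, this graph is triangle-free. Every vertex of $G$ has degree at most $3$, and every $3$-vertex lies in a triangle that meets $A\cup B$. Hence every vertex of $G-(A\cup B)$ that had degree $3$ in $G$ has a neighbor in $A\cup B$, so $G-(A\cup B)$ has maximum degree at most $2$. Its components are therefore paths and cycles, and $C$ is a whole component of $G-(A\cup B)$. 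In particular every vertex of $C$ has degree $2$ in $G-(A\cup B)$, and $\phi(A,B)$ is just the number of odd cycle components.

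The second step locates a triangle on $C$. If every vertex of $C$ were a $2$-vertex of $G$, then $C$ would be a component of $G$, hence all of $G$, and $G$ would be a cycle; this case was excluded. So $C$ contains a $3$-vertex $v$ whose third neighbor $w$ lies in $A\cup B$. By claw-freeness, $v$ lies in a triangle $T$. Since $C$ is triangle-free, $T$ is formed by $v$, $w$ and one $C$-neighbor $v'$ of $v$. Thus $v$ and $v'$ are consecutive vertices of $C$ with a common neighbor $w\in A\cup B$. The degree bound also forces $w$ to have no further neighbors: its third neighbor is outside $C\cup\{v,v'\}$, or $w$ is a $2$-vertex.

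The third step is the exchange. If $w\in B$, replace $B$ by $B'=(B\setminus\{w\})\cup\{v\}$. If $w\in A$, replace $A$ by $A'=(A\setminus\{w\})\cup\{v\}$. The cycle $C$ is then broken, and $w$ joins the uncovered part together with $v'$ and its other neighbor. We must check four things.
\begin{itemize}
\item[(i)] Packing condition: $v$ must be at distance greater than $3$ (respectively greater than $2$) from every other vertex of $B$ (respectively $A$). Every vertex within distance $1$ of $v$ is also within distance $2$ of $w$, except those reached through $C$. So the danger lies along $C$ and beyond the other $C$-neighbor of $v$.
\item[(ii)] Triangles: the triangle $T$ stays covered because it now contains $v$.
\item[(iii)] The triangle on the other side of $w$, if $w$ is heavy or in a second triangle, must remain covered.
\item[(iv)] The new component through $w$ must not be an odd cycle.
\end{itemize}
For (i) I would choose, among the $3$-vertices of $C$, the one minimizing conflicts: either $v$ or $v'$ (symmetric roles), or vertices of another triangle on $C$. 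For (iii), note that $w$ has degree $\le 3$ with two neighbors $v,v'$, so it lies in only the triangle $T$ unless $T$ is part of a diamond. The diamond case needs separate handling, where moving $w$ to the fourth vertex of the diamond works instead.

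The main obstacle is (i) together with (iv). Freeing $w$ can create a new odd cycle through $w$. Also, $v$ may be too close to another vertex of $A$ or $B$ lying on or near $C$. I would handle this by a case analysis on the number of $3$-vertices of $C$. If $C$ has at least two such chord-triangles, I would pick the one whose replacement keeps packing distances, using Lemma~\ref{l5} and the distance control of Lemma~\ref{l1} (distances in $S_G$ bound those in $G$ by Remark~\ref{r1}). If the freed vertex $w$ closes an odd cycle, I would instead shift a whole pair: swap $w$ with $v'$ rather than $v$. Since $v$ and $v'$ have different parities along $C$, one of the two choices makes the new cycle through $w$ even. This parity argument is what I expect to require the most care.
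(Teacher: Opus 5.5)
Your first step is correct and useful: every $3$-vertex outside $A\cup B$ has a neighbor in $A\cup B$, so $G-(A\cup B)$ has maximum degree at most $2$ and $C$ is a whole component. The proposal then leaves open exactly the points where the proof lives.

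The first gap is the packing condition (i). If $w\in A$, the exchange $w\to v$ fails as soon as the other $C$-neighbor $v''$ of $v$ has a neighbor in $A\setminus\{w\}$. If $w\in B$, it fails as soon as some vertex of $B\setminus\{w\}$ lies within distance $2$ of $v''$. Both exchanges at a given triangle can fail, and \emph{choose the one minimizing conflicts} gives no argument that some triangle of $C$ admits a conflict-free exchange. Lemmas~\ref{l1} and~\ref{l5} and Remark~\ref{r1} cannot supply this, because $(A,B)$ is an arbitrary minimizing triangle cover pair of $G$, not a set produced by the skeleton/Hall construction.

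The missing idea is the first move of the paper's proof. If some vertex of $C$ is at distance at least $3$ from $A$ (resp.\ at least $4$ from $B$), simply add it to $A$ (resp.\ $B$). This kills $C$ and creates no cycle, so by minimality every vertex of $C$ is within distance $2$ of $A$ and within distance $3$ of $B$. Once no vertex of $C$ lies in two triangles, the $3$-vertices of $C$ pair off along their triangles, so the odd cycle $C$ has a $2$-vertex $x_1$. Saturation then yields, up to symmetry, some $v_1\in A$ adjacent to $x_2$ and $x_3$. The exchange $A'=(A\setminus\{v_1\})\cup\{x_2\}$ passes the packing test precisely because $x_1$ has no third neighbor. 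That is where parity is really used, not in comparing $v$ with $v'$.

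The second gap is the case where a vertex of $C$ lies in two triangles, i.e.\ some $u\in A\cup B$ (your $w$) is adjacent to three consecutive vertices $x_{2t+1},x_1,x_2$ of $C$. So, contrary to your second step, the third neighbor of $w$ can lie on $C$. The diamond is then $\{u,x_{2t+1},x_1,x_2\}$ and its \emph{fourth vertex} is on $C$. No exchange of $u$ with a diamond vertex works. Replacing $u$ by $x_1$ produces the odd cycle $ux_2x_3\cdots x_{2t+1}$ of the same length. Replacing $u$ by $x_2$ (or $x_{2t+1}$) leaves the triangle $ux_1x_{2t+1}$ (or $ux_1x_2$) uncovered. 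So your parity device fails exactly where it would be needed.

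The paper treats this case in a separate claim. If $u\in B$, then $x_1$ is at distance at least $3$ from $A$, contradicting saturation. If $u\in A$, saturation with respect to $B$ gives a $B$-neighbor $v_1$ of $x_3$ (or of $x_{2t}$). Simultaneous exchanges between $A$ and $B$ are then required, e.g.\ $A'=(A\setminus\{u\})\cup\{x_3\}$ and $B'=(B\setminus\{v_1\})\cup\{u\}$.

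Conversely, when $w$ lies in only one triangle, your worry (iv) is vacuous. The third neighbor $w'$ of $w$ is then not adjacent to $v$ or $v'$. So if $w'\notin A\cup B$, it has at most one neighbor in $G-(A\cup B)$: if it is a $3$-vertex, its triangle avoids $w$ and meets $A\cup B$ elsewhere. The freed vertex $w$ therefore just links two paths (or becomes a leaf), and no new cycle of any parity appears. Your plan spends its effort on a non-issue, while the two real obstacles, the packing condition and the double-triangle configuration, remain unresolved.
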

\begin{proof}
Suppose, to the contrary, that $G' := G- (A\cup B)$ contains an odd cycle $C$. Since $G'$ contains no triangles, we have $l(C) \ge 5$. Set $C = x_1 x_2 \ldots x_{2t+1}$, $t\geq 2$.  

Note that each vertex $x$ of $C$ is at distance less than 3 from some vertex in $A$. Otherwise, suppose that there exists a vertex $x$ in $C$ such that $x$ is at distance at least $3$ from each vertex in $A$. Let $A'=A\cup \{x\}$. So, $(A', B)$ is a triangle cover pair with $\phi (A',B) < \phi (A,B)$, contradicting the minimality of $\phi(A,B)$. Similarly, each vertex $x$ of $C$ is at distance less than $4$ from some vertex in $B$. 

Since $G'$ contains no triangles and since each $3$-vertex in $G$ is contained in a triangle, we deduce that each $3$-vertex $x_i$ in $C$ is contained in a triangle of the form $x_ix_{i+1}a$ or $x_ix_{i-1}a$ where $a\in A\cup B$.

\begin{claim}
    Every $3$-vertex in $C$ is contained in exactly one triangle.
\end{claim}
\begin{proof}
    Suppose, to the contrary, that there exists a $3$-vertex $x_i\in V(C)$ such that $x_i$ is contained in two triangles. Without loss of generality, assume that $i=1$. Then, the two triangles containing $x_1$ are of the form $x_{2t+1}x_1u$ and $x_1x_2u$ for some $u\in A\cup B$. Suppose that $u\in B$. Then, $x_1$ is at distance at least three from each vertex in $A$, a contradiction. Thus, $u\in A$. Either $x_3$ or $x_{2t}$ has a neighbor in $B$ since otherwise $x_1$ is at distance at least four from each vertex in $B$. Without loss of generality, suppose that $x_3$ has a neighbor in $B$ say $v_1$. Note that $x_3$ lies in a unique triangle which is $x_3x_4v_1$. Thus, $v_1$ is a light vertex.
    
    Let $v'_1$ be the third neighbor of $v_1$, if it exists. Let $y_1,z_1$ be the neighbors of $v'_1$ if they exist. Note that if $v'_1$ is a $3$-vertex, then $v'_1$ is a light vertex. Since $B$ is a $3$-packing, neither of $y_1$ and $z_1$ belongs to $B$. We will study two cases:

\medskip
\noindent
\textbf{Case 1:} $v'_1\in A$.

Since $v_1\in B$ and $B$ is a $3$-packing, we deduce that $x_5$ has no neighbor in $B$. Hence, $x_3$ is at distance at least four from each vertex in $B\setminus \{v_1\}$. Let $B'=(B\setminus \{v_1\})\cup \{x_3\}$. So, $(A,B')$ is a triangle cover pair. Since any odd cycle containing $v_1$ must pass through $x_3$ or $v_1'$, we deduce that $v_1$ is not contained in any odd cycle in $G\setminus(A\cup B')$. Thus, $(A,B')$ is a triangle cover pair with $\phi (A,B') < \phi (A,B)$, a contradiction.

\medskip
\noindent
\textbf{Case 2:} $v'_1\notin A$.

Suppose that $x_1$ is at distance at least four from each vertex in $B\setminus \{v_1\}$. Then, $x_{2t}$ has no neighbors in $B$. Let $A'=(A\setminus \{u\})\cup \{ x_3\}$ and $B'=(B\setminus \{v_1\})\cup \{u\}$. Since all $3$-vertices in $C$ have a neighbor in $A\cup B$, we deduce that each cycle containing $v_1$ in $(G- (A\cup B))\cup \{v_1\}$ must pass through $x_3$. Thus, $v_1$ is not contained in any odd cycle in $G- (A'\cup B')$. Then, $(A',B')$ is a triangle cover pair with $\phi(A',B')<\phi (A,B)$, a contradiction. Hence, $x_{2t}$ has a neighbor in $B$. Then, $x_{2t+1}$ is at distance at least three from each vertex in $A\setminus\{u\}$. Let $z$ be the third neighbor of $x_{2t}$ where $z\in B$. Note that if $C$ is of length $5$, then $z=v_1$. Then, we get that $x_1$ is at distance at least $4$ from each vertex in $B\setminus \{v_1\}$, a contradiction as above. Hence, $l(C) >5$ and so $d(x_3,x_{2t+1})>2$. Let $A'=(A\setminus\{u\})\cup \{x_3, x_{2t+1}\}$ and 
$B'=(B\setminus\{v_1\})\cup \{x_2\}$. Since each cycle containing $u$ must contain $x_2$ or $x_{2t+1}$, we deduce that $u$ is not contained in any odd cycle in $G-(A'\cup B')$. Moreover, since all $3$-vertices of $C$ have a neighbor in $A\cup B$, we deduce that each cycle containing $v_1$ in $(G- (A\cup B)) \cup \{v_1\}$ must pass through $x_3$. So, $v_1$ is not contained in any odd cycle in $G- (A'\cup B')$. Then, $(A',B')$ is a triangle cover pair with $\phi(A',B')<\phi (A,B)$, a contradiction. 
\end{proof}

Now, each $3$-vertex in $C$ is contained in exactly one triangle and this triangle intersects $C$ at two vertices. Thus, we deduce that the number of $3$-vertices in $C$ is even. Therefore, $C$ contains a $2$-vertex since $C$ is odd. Without loss of generality, suppose that $x_1$ is a $2$-vertex. As $x_1$ is at distance at most $2$ from a vertex in $A$, we may suppose that $x_2$ is adjacent to a vertex $v_1\in A$. Note that $v_1$ is also adjacent to $x_3$ and $v_1$ is not adjacent to $x_4$. Let $v'_1$ be the neighbor of $v_1$ other than $x_2$ and $x_3$, if it exists. 

Suppose that $v'_1\in B$. Hence, $x_2$ is at distance at least $3$ from each vertex in $A\setminus \{v_1\}$. Let $A'=(A\setminus \{v_1\})\cup \{x_2\}$. So, $(A',B)$ is a triangle cover pair. Since any odd cycle containing $v_1$ must pass through $x_2$ or $v_1'$, we deduce that $v_1$ is not contained in any odd cycle in $G-(A'\cup B)$. Thus, $(A',B)$ is a triangle cover pair with $\phi (A',B) < \phi (A,B)$, a contradiction.

Now, assume that $v'_1\notin B$. Let $z$ be a vertex in $B$ that is at a distance of at most $3$ from $x_1$. If $z$ is not adjacent to $x_{2t+1}$, then $z$ is adjacent to $x_{2t}$. Then, since each $3$-vertex is contained in a triangle, we deduce that $zx_{2t}x_{2t-1}$ is a triangle and $x_{2t+1}$ is a $2$-vertex. So, $x_{2t+1}$ is at distance more than $2$ from each vertex in $A$, a contradiction. Then, $z$ is adjacent to $x_{2t+1}$ and $zx_{2t+1}x_{2t}$ is a triangle. As $x_{2t+1}$ is at distance less than $3$ from a vertex in $A$, the vertex $z$ is adjacent to a vertex $z'\in A$. Let $B'=(B\setminus \{z\})\cup \{x_{2t+1}\}$. Since any odd cycle containing $z$ must pass through $x_{2t+1}$ or $z'$, we deduce that $z$ is not contained in any odd cycle in $G-(A\cup B')$. Thus, $\phi(A,B')<\phi(A,B)$, a contradiction.
\end{proof}  

\begin{theorem}\label{t3}
    Every claw-free subcubic graph is $(1,1,2,3)$-packing colorable.
\end{theorem}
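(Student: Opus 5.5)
The plan is to assemble the pieces already established, after first reducing to the case treated in Sections 2--4. Since a packing coloring of a disconnected graph can be obtained componentwise, we may assume $G$ is connected. If $G=K_4$, we color its four vertices with $1_a,1_b,2,3$; every color class is a single vertex, so the coloring is valid. If $G$ has maximum degree at most $2$, then $G$ is a path or a cycle. Paths are bipartite and use only $1_a,1_b$. Even cycles are handled the same way, and an odd cycle is colored with $1_a,1_b$ together with a single vertex colored $2$.

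Next we remove vertices of degree $1$. Let $G_0$ be the graph obtained by repeatedly deleting such vertices. Then $G_0$ is connected and claw-free, and either $G_0$ has minimum degree at least $2$ or it is a single vertex. The pendant trees removed are actually pendant paths: a $3$-vertex lies in a triangle, so it survives the pruning. We therefore attach these paths back at the end, coloring each added vertex $1_a$ or $1_b$ so that it differs from its unique already-colored neighbor. Such a vertex has only one colored neighbor at the moment it is added, which makes this always possible. It remains to treat $G$ connected, claw-free and subcubic, with minimum degree at least $2$, containing a $3$-vertex, and with $G\ne K_4$.

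In this case we follow Steps 1--4. Claim \ref{l6} provides a close $2$-packing $X$ of $S_G$ with no close pairs in $S_G-X$. Hence the blocks containing triangles of $S_G-X$ form an independent set $I$ of $C_G$. Some care is needed here: a diamond block meets $X$ in one heavy vertex by property 1, and what remains is then no longer a triangle. So $I$ is taken to be the set of blocks whose triangles survive in $S_G-X$. Lemma \ref{l4} then gives a $3$-packing $Y$ meeting every such block, and we may choose $Y$ disjoint from $X$, since we only pick vertices of blocks avoiding $X$. By Remark \ref{r1}, distances in $G$ between skeleton vertices are at least those in $S_G$. So $X$ is a $2$-packing and $Y$ is a $3$-packing in $G$, and since $G$ and $S_G$ have the same triangles, $G-(X\cup Y)$ is triangle-free. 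Thus $(X,Y)$ is a triangle cover pair, and a triangle cover pair exists.

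We then choose a triangle cover pair $(A,B)$ minimizing $\phi(A,B)$. By Lemma \ref{l7}, $G-(A\cup B)$ has no odd cycle, so it is bipartite with parts $S_1,S_2$. Coloring $S_1,S_2,A,B$ with $1_a,1_b,2,3$ respectively gives the required $(1,1,2,3)$-packing coloring. The only delicate points are bookkeeping: checking the claw-free reduction for pendant parts, confirming that $Y$ can be taken disjoint from $X$, and handling diamonds. The substantive work is already contained in Claim \ref{l6} and Lemma \ref{l7}.
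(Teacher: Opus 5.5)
Your proposal is correct and follows the paper's proof. You reduce to a connected graph with $\delta\ge 2$, a $3$-vertex and $G\neq K_4$, build a triangle cover pair from Claim~\ref{l6} and Lemma~\ref{l4}, and invoke Lemma~\ref{l7} to $2$-color the remainder with $1_a,1_b$. The differences are cosmetic: you strip all pendant paths at once and recolor them afterwards (the pruned graph may be just a triangle, which your cycle case covers), whereas the paper deletes a single leaf in a minimal-counterexample argument, and you spell out the initial triangle cover pair that the paper constructs just before Lemma~\ref{l7}.
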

\begin{proof}
    On the contrary, suppose that $G$ is a counterexample of minimum order. We may assume that $G$ is connected. Clearly $G$ is neither a cycle nor $K_4$. Suppose to the contrary that $G$ contains a leaf $u$. Consider a $(1,1,2,3)$-packing coloring of $G-u$ using the colors $1_a,1_b,2,3$. Then, either $1_a$ or $1_b$ is not assigned to the neighbor of $u$, say $1_a$. By coloring $u$ using the color $1_a$ we obtain a $(1,1,2,3)$-packing coloring of $G$, a contradiction. Then, we may assume that $\delta(G)\geq 2$. Moreover, as $G$ is not a cycle, $G$ contains a $3$-vertex. By Lemma \ref{l7}, there exists a triangle cover pair $(X,Y)$ such that $\phi(X,Y)=0$. Let $\{I_1,I_2\}$ be a partition of $V(G)\setminus (X\cup Y)$ such that $I_1$ and $I_2$ are independent. Color the vertices of $I_1$, $I_2$, $X$, and $Y$ with the colors $1_a$, $1_b$, $2$, and $3$, respectively, to obtain a $(1,1,2,3)$-packing coloring of $G$.
\end{proof}

\section{A $(1,1,3,3,3)$-packing coloring}
We now adapt the approach developed in the previous section to obtain a $(1,1,3,3,3)$-packing coloring. The main idea is to replace the single independent set of the core graph by a proper $3$-coloring of the core graph. Hall’s theorem is then applied independently to each color class to construct three disjoint $3$-packings of the skeleton. Finally, we modify these packings until all triangles and odd cycles disappear.

Let $G$ be a connected claw-free subcubic graph with minimum degree at least $2$ and containing a $3$-vertex and such that $G\notin \{K_4,\mathcal{H}\}$ (see Figure \ref{figure_1}).

\begin{proposition}\label{prop1}
    If $C_G=K_4$, then $G$ is $(1,1,3,3,3)$-packing colorable.
\end{proposition}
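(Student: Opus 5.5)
The first step is to pin down the structure of $G$ when $C_G=K_4$. Each vertex $v_i$ of $C_G$ has degree $3$, so the block $V_i$ sends edges of $S_G$ to three distinct other blocks. By Remark~\ref{r2}, only light $3$-vertices have neighbours outside their block, and each has exactly one. A diamond has only two light $3$-vertices, so every $S_G[V_i]$ is a triangle whose three vertices are light $3$-vertices, each joined to a different block. Hence $S_G$ is the truncated $K_4$: four triangles $T_1,\dots,T_4$, with exactly one edge $e_{ij}$ between $T_i$ and $T_j$ for each pair. This graph is exactly $\mathcal{H}$. Therefore $G$ is obtained from $\mathcal{H}$ by replacing each edge $e_{ij}$ by a bad path $P_{ij}$ of length $\ell_{ij}\ge 1$, and since $G\neq\mathcal{H}$, at least one $\ell_{ij}\ge 2$. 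The edges $e_{ij}$ correspond to the edges of $K_4$, so the symmetry group of $K_4$ acts on the configurations.

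The colouring strategy is to colour the twelve skeleton vertices first, using the three colour-$3$ classes $Y_1,Y_2,Y_3$ and the two colour-$1$ classes. We then extend along each bad path, using $1_a,1_b$ on internal vertices where the parity works. Where it does not, we place one internal vertex of the path into a suitable $Y_k$; this is legal when that vertex is at distance at least $4$ from the other members of $Y_k$. By Remark~\ref{r1}, subdividing edges never decreases distances between skeleton vertices, so any family of $3$-packings valid in $\mathcal{H}$ remains valid in $G$.

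The core of the argument is to exploit the subdivided edge. Fix, up to symmetry, a path $P_{12}$ with $\ell_{12}\ge 2$, with ends $a\in T_1$ and $b\in T_2$. In $G$ we then have $d(a,b)\ge 2$, and the two triangles $T_1,T_2$ are pushed apart. My plan is to exhibit explicit $Y_1,Y_2,Y_3$ that contain one vertex from each triangle, which kills all triangles, such that $G-(Y_1\cup Y_2\cup Y_3)$ is bipartite. The point is that in $\mathcal{H}$ this is impossible: any such choice leaves an odd cycle, or forces two chosen vertices in the same class to be at distance $3$ through a middle triangle (exactly the conflicts of Remark~\ref{r3}). The extra length on $P_{12}$ relaxes one of these distance-$3$ conflicts. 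Concretely, I would put $a$ and a vertex of $T_3$ or $T_4$ into the same class, using the fact that $a$ is now at distance at least $4$ from vertices reached only through $P_{12}$. I would then check the remaining graph by exhibiting a $2$-colouring.

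The main obstacle is the parity bookkeeping on the remaining graph, which depends on all six lengths $\ell_{ij}$, not only on $\ell_{12}$. I would first handle the basic case where only $\ell_{12}\ge 2$, writing an explicit colouring and checking separately the cases $\ell_{12}$ even and $\ell_{12}$ odd. For the general case, I would show that every other path $P_{ij}$ can be absorbed: if inserting $P_{ij}$ with colours $1_a,1_b$ creates an odd cycle, I would recolour one internal vertex of $P_{ij}$ with a colour-$3$ class. Such a class exists because that internal vertex lies at distance at least $2$ from both of its end triangles, and I would verify the distance $\ge 4$ condition to the three other members of that class. If this uniform absorption fails in some configuration, I would fall back on a case split by the set of subdivided edges up to $K_4$-symmetry: one edge, two adjacent or two opposite edges, a triangle, a path or a star of three edges, and so on. Each case would get a direct colouring.
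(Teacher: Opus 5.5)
Your reduction to $S_G=\mathcal{H}$, with $G$ obtained by subdividing some of the six edges $e_{ij}$, agrees with the paper. The colouring step, however, has a genuine gap, and the one concrete choice you commit to fails.

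The four triangles are disjoint and there are only three colour-$3$ classes. So some class must contain two vertices of different triangles at distance at least $4$ in $G$. Remark~\ref{r1} transfers nothing useful from $\mathcal{H}$: its diameter is $3$, so its $3$-packings are singletons. Write $t_{ij}$ for the end of $P_{ij}$ in $T_i$, so $a=t_{12}$ and $b=t_{21}$.

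You propose pairing $a$ with a vertex of $T_3$ or $T_4$. But routes leaving $T_1$ through $t_{13}$ or $t_{14}$ never use $P_{12}$. They give $d_G(a,w)\le \ell_{13}+2$ for $w\in V(T_3)$ and $d_G(a,w)\le \ell_{14}+2$ for $w\in V(T_4)$. So in your ``basic case'', where only $P_{12}$ is subdivided, $a$ is within distance $3$ of every vertex of $T_3\cup T_4$.

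A pair that works is $x=t_{13}$, $y=t_{24}$: the vertices of $T_1\setminus\{a\}$ and $T_2\setminus\{b\}$ attached to \emph{different} further triangles. Every $x$--$y$ route either traverses $P_{12}$, with length at least $\ell_{12}+2\ge 4$, or goes through $T_3$ or $T_4$. The shortest such routes have lengths $\ell_{13}+\ell_{23}+2$ and $\ell_{14}+\ell_{24}+2$, both at least $4$. By contrast, $t_{13},t_{23}$ would not do, since they are at distance at most $\ell_{13}+\ell_{23}+1$.

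The parity bookkeeping you flag as the main obstacle is not needed. The absorption step meant to resolve it is also unjustified as stated. An internal vertex of $P_{ij}$ lying at distance at least $2$ from its end triangles (which is false for the vertex next to an end) gives no control of its distance to the other members of a colour-$3$ class. The fallback case analysis is never carried out.

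The paper's choice removes all of this. Put $x,y$ in $3_a$, and let $3_b=\{t_{34}\}$ and $3_c=\{t_{43}\}$ be the two ends of $P_{34}$. Then, whatever the six lengths are, $G-\{x,y,t_{34},t_{43}\}$ is a disjoint union of paths:
\begin{itemize}
\item the interior of $P_{34}$;
\item one path running through the remnant of $P_{24}$, $t_{42}t_{41}$, $P_{14}$, $t_{14}t_{12}$, $P_{12}$, $t_{21}t_{23}$, $P_{23}$, $t_{32}t_{31}$, and the remnant of $P_{13}$.
\end{itemize}
So $1_a$ and $1_b$ complete the colouring with no parity analysis at all. Until you identify such a concrete quadruple of colour-$3$ vertices and check what remains, your proposal is a plan rather than a proof.
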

\begin{proof}
    Suppose that $C_G=K_4$ and let $V_1,V_2,V_3,V_4$ be the partition of the vertex set of $S_G$ such that $S_G[V_i]$ is either a triangle or a diamond, for all $i\in \{1,2,3,4\}$. Since a vertex representing a diamond in $C_G$ has at most two neighbors, we can deduce that $S_G[V_i]$ is a triangle, for all $i\in \{1,2,3,4\}$. Hence, $S_G=\mathcal{H}$. But, $G\neq \mathcal{H}$. Thus, $G$ is a graph obtained by subdividing at least one edge of $\mathcal{H}$. Without loss of generality, suppose that the edge between $V_1$ and $V_2$ is subdivided. Then, one can choose vertices $x\in V_1$ and $y\in V_2$ such that $dist_G(x,y)\geq 4$. Let $u\in V_3$ and $v\in V_4$ such that $uv\in E(S_G)$. Observe that $G-\{x,y,u,v\}$ is a path. Then, we can color $x,y,u,v$ by $3_a,3_a,3_b,3_c$ respectively and the remaining vertices by $1_a$ and $1_b$ in order to obtain a $(1,1,3,3,3)$-packing coloring of $G$.
\end{proof}

From now on, we may assume that $C_G\neq K_4$.
\begin{lemma}\label{l8}
    $\chi(C_G)\leq 3$.
\end{lemma}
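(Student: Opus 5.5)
We will apply Brooks' theorem (Theorem~\ref{t1}) to each component of $C_G$ (in fact $C_G$ is connected, since $G$ and hence $S_G$ is connected and contraction preserves connectivity). The first step is to bound the maximum degree of $C_G$.

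\textbf{Degree bound.} Let $v_i\in V(C_G)$. Every edge of $C_G$ incident to $v_i$ comes from an edge of $S_G$ joining a vertex of $V_i$ to a vertex outside $V_i$. By Remark~\ref{r2}, only light $3$-vertices of $V_i$ have a neighbor outside $V_i$, and each has exactly one.
\begin{itemize}
\item If $S_G[V_i]$ is a triangle, it has at most three light $3$-vertices, so $\deg_{C_G}(v_i)\le 3$.
\item If $S_G[V_i]$ is a diamond, its two degree-$3$ vertices are heavy. Only the two degree-$2$ vertices of the diamond can be light $3$-vertices in $S_G$, so $\deg_{C_G}(v_i)\le 2$.
\end{itemize}
Hence $\Delta(C_G)\le 3$. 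Parallel edges arising from two $S_G$-edges between the same pair of blocks are identified in $C_G$, which only lowers degrees.

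\textbf{Applying Brooks.} If $\Delta(C_G)\le 2$, then $C_G$ is a path or a cycle, hence $3$-colorable. If $\Delta(C_G)=3$, Brooks' theorem gives $\chi(C_G)\le 3$ unless $C_G$ is complete or an odd cycle. An odd cycle has maximum degree $2$, so this case cannot occur. The only complete graph with maximum degree $3$ is $K_4$, which is excluded by the standing assumption $C_G\neq K_4$ (Proposition~\ref{prop1} handled that case). Therefore $\chi(C_G)\le 3$.

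\textbf{Main obstacle.} The only delicate point is the degree bound for diamond blocks and the justification that $C_G$ is a simple graph, so that Brooks applies as stated. If $C_G$ were to have loops or multi-edges, I would argue as follows.
\begin{itemize}
\item Loops do not occur, since $i\neq j$ is required in the definition of $C_G$.
\item Multiple $S_G$-edges between the same pair $V_i,V_j$ are collapsed to a single edge of $C_G$ by definition.
\end{itemize}
So $C_G$ is simple and the argument goes through. If $C_G$ happened to be disconnected, Brooks would simply be applied to each component separately.
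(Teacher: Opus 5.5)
Your proposal is correct and follows essentially the same route as the paper: bound $\Delta(C_G)\le 3$ via Remark~\ref{r2}, use that $C_G$ is connected, and apply Brooks' theorem together with the standing assumption $C_G\neq K_4$. Your additional details (diamond blocks have core-degree at most $2$, the case $\Delta(C_G)\le 2$, and ruling out odd cycles) make explicit steps that the paper leaves implicit.
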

\begin{proof}
   By Remark \ref{r2}, we have $\Delta(C_G)\leq 3$. Moreover, since $G$ is connected, so are $S_G$ and $C_G$. By Brooks' theorem, and since $C_G\neq K_4$ the result follows.
\end{proof}

Let $\{\mathcal{S}_1,\mathcal{S}_2,\mathcal{S}_3\}$ be a partition of $V(C_G)$ into independent sets.

\begin{lemma}\label{l9}
    There exist three disjoint $3$-packings $S_1$, $S_2$, and $S_3$ in $S_G$ such that $|(S_1\cup S_2\cup S_3)\cap V_j|=1$ for all $j\in \{1,\dots ,k\}$.
\end{lemma}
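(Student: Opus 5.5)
The plan is to apply Lemma~\ref{l4} once to each colour class of the partition $\{\mathcal{S}_1,\mathcal{S}_2,\mathcal{S}_3\}$ of $V(C_G)$ provided by Lemma~\ref{l8}.

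\textbf{Step 1 (one packing per class).} Fix $t\in\{1,2,3\}$ and set $I=\mathcal{S}_t$. This is an independent set of $C_G$. Relabel the vertices of $C_G$ so that $I=\{v_1,\dots,v_s\}$, which is the convention assumed throughout Section~3. The construction there is then available verbatim: the conflict graph $H_I$, its components of order at most $3$ (Lemma~\ref{l2}), the bipartite graph $B_I$, and the Hall condition (Lemma~\ref{l3}). Lemma~\ref{l4} therefore yields a $3$-packing $S_t$ in $S_G$ with $|S_t\cap V_j|=1$ for every $v_j\in\mathcal{S}_t$. By construction $S_t\subseteq\bigcup_{v_j\in\mathcal{S}_t}V_j$, since each chosen vertex $x_i$ lies in $V_i$.

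\textbf{Step 2 (disjointness and the intersection count).} The sets $\mathcal{S}_1,\mathcal{S}_2,\mathcal{S}_3$ partition $V(C_G)$ and the $V_j$ are pairwise disjoint. Hence the ground sets $\bigcup_{v_j\in\mathcal{S}_t}V_j$ for $t=1,2,3$ are pairwise disjoint, and so $S_1,S_2,S_3$ are pairwise disjoint. For any $j\in\{1,\dots,k\}$, the vertex $v_j$ lies in exactly one class $\mathcal{S}_t$. Then $V_j$ meets $S_t$ in exactly one vertex, while $V_j$ is disjoint from the ground sets of the other two classes and so misses $S_{t'}$ for $t'\neq t$. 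This gives $|(S_1\cup S_2\cup S_3)\cap V_j|=1$.

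\textbf{Main point to check.} The only delicate issue is that Section~3 was written for $I=\{v_1,\dots,v_s\}$ with the complement indexed $\{s+1,\dots,k\}$. I will state explicitly that this is only a labelling convention: Lemmas~\ref{l1}--\ref{l4} use nothing but the independence of $I$ in $C_G$ and the block structure of $S_G$, so they apply to each $\mathcal{S}_t$ after relabelling. In particular, Lemma~\ref{l1} needs only that the intermediate block $V_{i_0}$ lies outside $I$. A class $\mathcal{S}_t$ may also be empty, in which case we take $S_t=\emptyset$.
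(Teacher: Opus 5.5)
Your proposal is correct and follows the paper's own proof: apply Lemma~\ref{l4} to each colour class $\mathcal{S}_t$ of the partition of $V(C_G)$ into three independent sets obtained via Lemma~\ref{l8}. Your observation that $S_t\subseteq\bigcup_{v_j\in\mathcal{S}_t}V_j$ gives the pairwise disjointness and the ``exactly one'' count explicitly, where the paper only asserts them briefly, and your relabelling and empty-class remarks are harmless clarifications.
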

\begin{proof}
    Let $i\in \{1,2,3\}$. As $\mathcal{S}_i$ is an independent set in $C_G$, by Lemma \ref{l4}, there exists a $3$-packing $S_i$ in $S_G$ such that $|S_i\cap V_j|=1$ for all $v_j\in S_i$. Now, since $\{\mathcal{S}_1,\mathcal{S}_2,\mathcal{S}_3\}$ form a partition of $V(C_G)$, it follows that, for every $j\in \{1,\dots ,k\}$, there exists $i_j\in \{1,2,3\}$ such that $|S_{i_j}\cap V_j|=1$.
    Since the three color classes partition $V(C_G)$, every $V_j$ belongs to exactly one $\mathcal S_i$, so every selected vertex comes from a different family of $V_j$.
\end{proof}

Let $S_1,S_2,S_3$ be three disjoint $3$-packings in $S_G$ such that $|(S_1\cup S_2\cup S_3)\cap V_j|\geq 1$ for all $j\in \{1,\dots ,k\}$. Let $\varphi(S_1,S_2,S_3)$ be the number of triangles in $S_G-(S_1\cup S_2\cup S_3)$. Among all choices of such three $3$-packings, suppose that $S_1$, $S_2$, and $S_3$ are chosen such that $\varphi(S_1,S_2,S_3)$ is minimum.

\begin{lemma}\label{l10}
    $\varphi(S_1,S_2,S_3)=0$.
\end{lemma}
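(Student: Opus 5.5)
The plan is to first pin down where a triangle of $S_G-(S_1\cup S_2\cup S_3)$ can sit, and then remove such a triangle by a local exchange that contradicts the minimality of $\varphi$. Every triangle of $S_G$ lies inside one block $V_j$, because a triangle meeting two blocks would force an edge of $S_G$ inside a triangle or diamond structure that the partition $V_1,\dots,V_k$ already accounts for. Each block contains a chosen vertex. So if $S_G[V_j]$ is a triangle, or if the chosen vertex of a diamond $V_j$ is one of its two heavy vertices, no triangle of $V_j$ survives. Hence, assuming $\varphi(S_1,S_2,S_3)>0$, there is a diamond $V_j=\{x,y,a,b\}$ with the following data:
\begin{itemize}
\item $x,y$ are the heavy vertices and $a,b$ the light ones, so its triangles are $xya$ and $xyb$;
\item the chosen vertex is $a\in S_i$, and $b,x,y\notin S_1\cup S_2\cup S_3$, so the triangle $xyb$ survives.
\end{itemize}
By Remark~\ref{r2}, $x$ and $y$ have no neighbours outside $V_j$, and $a,b$ have unique outside neighbours $a',b'$, when these exist. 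If $b$ is a $2$-vertex, the argument only gets easier.

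\textbf{First modification.} Replace $a$ by $x$ in $S_i$. Comparing balls, $N^3[x]=\{y,a,b,a',b'\}\cup N(a')\cup N(b')$. Every vertex of this set except those of $N(b')\setminus\{b\}$ already lies within distance $3$ of $a$. So $(S_i\setminus\{a\})\cup\{x\}$ is a $3$-packing unless $S_i$ contains a vertex of $N(b')\setminus\{b\}$. The replacement kills both triangles of $V_j$ and creates no new surviving triangle, since only $a$ leaves the union and its block is now covered by $x$. This would contradict minimality. By the symmetric statement for $y$, we may assume that $S_i$ meets $N(b')\setminus\{b\}$. In particular $b'$ exists, $b'$ is a light $3$-vertex lying in a triangle $b'cd$, and $S_i$ contains $c$ or $d$.

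\textbf{Second modification.} Add $b$ to some $S_l$ with $l\neq i$. This is allowed as long as $S_l$ has no vertex within distance $3$ of $b$, i.e., in $\{a,x,y,a',b'\}\cup N(b')\cup N^2(b')$. Only $a'$, $b'$, $c$, $d$ and the outside neighbours $c',d'$ of $c,d$ can lie in $S_l$, since $a\in S_i$. Adding a vertex keeps the covering condition $|(S_1\cup S_2\cup S_3)\cap V_j|\ge 1$ and strictly decreases $\varphi$. So we may assume that both other packings $S_l$, $l\neq i$, hit the small set $\{a',b',c,d,c',d'\}$.

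\textbf{Remaining configuration (the main obstacle).} We now have:
\begin{itemize}
\item $S_i$ contains $c$ (say);
\item each of the two other packings has a vertex in $\{a',b',c,d,c',d'\}$.
\end{itemize}
The $3$-packing conditions restrict this heavily. For instance $c\in S_i$ forbids $d$, $b'$ and $c'$ from being in any other vertex of $S_i$, and two vertices of the same $S_l$ cannot both lie near $b'$. The plan is a case analysis on which of $a',b',d,c',d'$ carry the blocking vertices of the other two packings. In each case we perform a swap that frees space:
\begin{itemize}
\item move the blocking vertex $w\in S_l$ to another vertex of its own block (for example $b'\mapsto c$ or $d$ type moves inside the triangle $b'cd$, or using the analogue of the first modification in a neighbouring diamond), while keeping that block covered and without uncovering a triangle;
\item then insert $b$ into $S_l$, or $x$ into $S_i$.
\end{itemize}
I expect the genuinely delicate case to be when the blocking vertices are $b'$ and a vertex at distance $2$ from $b$ through $d$, or $a'$. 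There I would first try moving $c$ out of $S_i$ to $d$ or $b'$ of the same triangle, which may release $N(b')$ for the swap $a\to x$. Failing that, I would use the fact that $\Delta(S_G)\le 3$ and claw-freeness leave too few vertices around $b'$ to block all three packings simultaneously.
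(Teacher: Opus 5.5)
Your reduction is correct as far as it goes. A surviving triangle forces a diamond $V_j=\{x,y,a,b\}$ whose only chosen vertex is a light vertex $a\in S_i$. For an \emph{arbitrary} admissible triple, the swap $a\mapsto x$ fails exactly when $S_i$ contains a vertex of $N(b')\setminus\{b\}$.

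The proposal stops there, and that is a genuine gap. The ``remaining configuration'' is only a list of intended moves (``I expect'', ``I would first try'', ``failing that''), with no argument that some move always succeeds. Moving a blocking vertex inside its own block can create new distance-$3$ conflicts, break the covering of another block, or uncover another triangle. Nothing in the sketch controls this or guarantees a strict decrease of $\varphi$. As written, the lemma is not proved.

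The missing idea is that you need not reason locally about an arbitrary minimizer. Since $\varphi$ is minimized over \emph{all} admissible triples, it suffices to exhibit one admissible triple with $\varphi=0$. Start from the triple of Lemma~\ref{l9}, in which each $S_i$ consists of exactly one vertex from each block $V_m$ with $v_m\in\mathcal S_i$. In that setting your obstruction cannot arise:
\begin{itemize}
\item Since $b'$ is a light $3$-vertex whose unique outside neighbour is $b$, every $c\in N(b')\setminus\{b\}$ lies in the block of $b'$.
\item That block is adjacent to $V_j$ in $C_G$ via the edge $bb'$, so it is not in $\mathcal S_i$, and hence $c\notin S_i$.
\end{itemize}
Equivalently, via Lemma~\ref{l1}: a heavy vertex has no neighbour outside its block, so the equality case of Lemma~\ref{l1} is excluded, and $x$ is at distance at least $4$ from every other vertex of $S_i$.

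So your first modification alone finishes the proof. It can be repeated diamond by diamond, because each swap keeps every $S_i$ a transversal of the blocks in $\mathcal S_i$. This is precisely the paper's argument: replace the light vertex by a heavy one, which has no $S_1$-perfectly close vertex.

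Your observation does show that the paper's one-line proof tacitly uses this independent-set structure rather than arbitrary triples. The repair is the one just described, not the case analysis you propose.
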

\begin{proof}
    Suppose that $S_G-(S_1\cup S_2\cup S_3)$ contains a triangle. Then, there exist $i_0\in \{1,2,3\}$ and $j_0\in \{1,\dots ,k\}$ such that $S_{i_0}\cap V_{j_0}=\{x_{i_0}\}$, $x_{i_0}$ is a light vertex, and $S_G[V_{j_0}]$ is a diamond. Without loss of generality, suppose that $i_0=j_0=1$. Note that every heavy vertex has no $S_1$-perfectly close vertex. Let $x'_1\in V_1$ be a heavy vertex and $S'_1=(S_1\setminus \{x_1\})\cup \{x'_1\}$. The set of three $3$-packings $\{S'_1,S_2,S_3\}$ meets every $V_i$ such that $\varphi(S'_1,S_2,S_3)<\varphi(S_1,S_2,S_3)$, a contradiction.
\end{proof}

By the definition of $S_G$ and Lemma \ref{l10}, there exist three disjoint $3$-packings $S_1,S_2,S_3$ in $G$ such that $|(S_1\cup S_2\cup S_3)\cap V_j|\geq 1$ for all $j\in \{1,\dots ,k\}$, and $\varphi(S_1,S_2,S_3)=0$. Let $\phi(S_1,S_2,S_3)$ be the number of odd cycles in $G-(S_1\cup S_2\cup S_3)$. Among all choices of such three $3$-packings, suppose that $S_1$, $S_2$, and $S_3$ are chosen such that $\phi(S_1,S_2,S_3)$ is minimum.

\begin{lemma}\label{l11}
    $\phi(S_1,S_2,S_3)=0$.
\end{lemma}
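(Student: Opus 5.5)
We argue by contradiction, in the same spirit as Lemma~\ref{l7}. Suppose $G':=G-(S_1\cup S_2\cup S_3)$ contains an odd cycle $C=x_1x_2\cdots x_{2t+1}$. By the choice of $S_1,S_2,S_3$ (triangle-free after Lemma~\ref{l10} and the construction of $S_G$), we have $t\ge 2$.

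\textbf{Distance observations.} First, every vertex $x$ of $C$ is at distance at most $3$ from some vertex of $S_i$, for each $i\in\{1,2,3\}$. Otherwise $S_i\cup\{x\}$ is still a $3$-packing, the family still meets every $V_j$ and stays triangle-free, and $\phi$ strictly decreases. Second, every $3$-vertex of $C$ lies in a triangle, and this triangle must contain a vertex of $S_1\cup S_2\cup S_3$. Hence each $3$-vertex $x_i$ of $C$ lies in a triangle $x_ix_{i\pm1}a$ with $a$ in some $S_\ell$.

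\textbf{Unique triangles and a $2$-vertex on $C$.} Next I would show that every $3$-vertex of $C$ lies in exactly one triangle. Suppose $x_1$ lies in the two triangles $x_{2t+1}x_1u$ and $x_1x_2u$, with $u\in S_1$ say. Then $x_1$ must still be within distance $3$ of vertices of $S_2$ and of $S_3$, and these must attach near $x_3,x_{2t}$ or near the third neighbour of $u$. I would then perform swaps as in Cases 1--2 of Lemma~\ref{l7}. For example, replace $u$ by $x_1$ in $S_1$, which is legal since $u$'s block still meets the union through $x_1$, where $x_1$ and $u$ lie in the same diamond. Alternatively, move a vertex $v\in S_2$ adjacent to $x_3$ to $x_3$. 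In each case the removed vertex ends up with all of its possible cycle-continuations blocked, so $\phi$ drops.

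Given this, the $3$-vertices of $C$ pair up along $C$ via their triangles, so their number is even. Since $C$ is odd, $C$ contains a $2$-vertex, say $x_1$.

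\textbf{The main step: the $2$-vertex.} This is the key step and I expect it to be the main obstacle. The condition that $x_1$ is within distance $3$ of all three classes forces the three nearby vertices $S$-vertices to sit on triangles through $x_2x_3$, $x_{2t+1}x_{2t}$, or to be reached through these. Since we only have packings of radius $3$ (no $2$-packing with its tighter control), there are more configurations than in Lemma~\ref{l7}.

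I would first try the simplest swap. Take $z\in S_\ell$ adjacent to $x_{2t+1}$, forming the triangle $zx_{2t+1}x_{2t}$, and set $S_\ell'=(S_\ell\setminus\{z\})\cup\{x_{2t+1}\}$. This requires checking that $x_{2t+1}$ is at distance at least $4$ from $S_\ell\setminus\{z\}$. Any violating vertex would have to pass near $x_1$ or $x_{2t-1}$, which is controlled by the $3$-packing property of $S_\ell$ together with $z$. After the swap, the block of $z$ is still met, and any odd cycle through $z$ must use $x_{2t+1}$ or the third neighbour $z'$ of $z$.

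When $z'$ also lies in $G'$, I would exploit the fact that $z$ is a light $3$-vertex whose triangle is $zx_{2t+1}x_{2t}$, and choose between the two sides $x_2$ and $x_{2t+1}$ symmetrically. Using the availability of three classes, I would argue that on one side the removed vertex's third neighbour lies in another class. Otherwise the $3$-packing constraints contradict the distance conditions at $x_1$.

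Each such modification preserves disjointness, the covering of every $V_j$, and triangle-freeness, while decreasing $\phi$. This contradicts minimality, so $\phi(S_1,S_2,S_3)=0$.
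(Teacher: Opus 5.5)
Your outline matches the paper's: every vertex of $C$ is within distance $3$ of all three classes, every $3$-vertex of $C$ lies in one triangle, parity gives a $2$-vertex, and a local swap finishes. However, both places where the real work happens are only asserted, and the moves you do name fail as stated.

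\textbf{The claim.} You miss that $\{x_1,x_2,x_{2t+1},u\}$ induces a diamond. So $u$ has no third neighbour, and the only vertices outside $G'$ within distance $3$ of $x_1$ are $u$ and the third neighbours of $x_3$ and $x_{2t}$. This forces, up to relabelling, $v_1\in S_2$ adjacent to $x_3$ and $v_2\in S_3$ adjacent to $x_{2t}$.

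Your first swap (replace $u$ by $x_1$ in $S_1$) cannot decrease $\phi$. Since $N[x_1]=N[u]$, in the new graph $u$ has exactly the neighbours $x_2,x_{2t+1}$ that $x_1$ had in $G'$. The new graph is therefore isomorphic to $G'$, and $C$ reappears through $u$.

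The second swap (replace $v_1$ by $x_3$ in $S_2$) is the paper's, but ``all cycle-continuations blocked'' needs the outer neighbour $v_1'$ of $v_1$ to leave the graph. The paper gets $v_1'\in S_3$ from the distance condition at $x_2$: its only off-cycle vertices within distance $3$ are $u$, $v_1$, $v_1'$, while $v_2$ is at distance $4$. Without this, $v_1$ keeps the two neighbours $x_4$ and $v_1'$ and may lie on a new odd cycle. You must also check that $x_3$ is at distance at least $4$ from $S_2\setminus\{v_1\}$.

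\textbf{The $2$-vertex step.} The sentence ``on one side the removed vertex's third neighbour lies in another class'' is exactly what must be proved, and your legality check is false in general. Let $a$ and $b$ be the off-cycle neighbours of $x_2$ and $x_{2t+1}$ (triangles $ax_2x_3$ and $bx_{2t+1}x_{2t}$), with outer neighbours $a'$ and $b'$.

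Then $d(a,x_{2t+1})=3$, while $d(a,b)$ can be $4$ when $t\ge 3$. So the $3$-packing property does not stop $a$ and $b$ from lying in the same class $S_\ell$. In that case neither $(S_\ell\setminus\{b\})\cup\{x_{2t+1}\}$ nor the symmetric swap at $x_2$ is a $3$-packing. Concretely, $a,b\in S_1$, $a'\in S_2$, $b'\in S_3$ satisfies the distance condition at $x_1$, since the off-cycle vertices within distance $3$ of $x_1$ are only $a,a',b,b'$. Your intended dichotomy ``one side works, or the distance condition at $x_1$ fails'' therefore breaks down.

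What is needed is an explicit enumeration of these off-cycle vertices. This also shows that $x_2$ and $x_{2t+1}$ cannot both be $2$-vertices, since then only two candidates remain for three classes. A case analysis follows:
\begin{itemize}
\item When $a$ and $b$ lie in different classes, say $a\in S_1$ and $b\in S_2$, the third class must contain $a'$ or $b'$. This is the paper's (1)--(3).
\item If $a'\in S_3$, the swap $(S_1\setminus\{a\})\cup\{x_2\}$ is legal and leaves $a$ with the single neighbour $x_3$; if instead $b'\in S_3$, the symmetric swap at $x_{2t+1}$ works.
\item The same-class configuration is not covered by your sketch. The paper's ``we may suppose (1)--(3)'' implicitly assumes $a$ and $b$ are in different classes, so any complete write-up needs a separate argument for this case.
\end{itemize}
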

\begin{proof}
    Suppose, to the contrary, that $G' := G- (S_1\cup S_2\cup S_3)$ contains an odd cycle $C$. Since $G'$ contains no triangles, we have $l(C) \ge 5$. Set $C = x_1 x_2 \ldots x_{2t+1}$, $t\geq 2$.  

    Note that each vertex $x$ of $C$ is at distance less than $4$ from some vertex in $S_i$, for every $i\in \{1,2,3\}$.

    Since $G'$ contains no triangles and since each $3$-vertex in $G$ is contained in a triangle, we deduce that each $3$-vertex $x_i$ in $C$ is contained in a triangle of the form $x_ix_{i+1}a$ or $x_ix_{i-1}a$ where $a\in S_1\cup S_2\cup S_3$.

    \begin{claim}
    Every $3$-vertex in $C$ is contained in exactly one triangle.
    \end{claim}
    \begin{proof}
    Suppose, to the contrary, that there exists a $3$-vertex $x_i\in V(C)$ such that $x_i$ is contained in two triangles. Without loss of generality, assume that $i=1$. Then, the two triangles containing $x_1$ are of the form $x_{2t+1}x_1u$ and $x_1x_2u$ for some $u\in S_1\cup S_2\cup S_3$. Without loss of generality, suppose that $u\in S_1$. As $x_1$ is at distance less than $4$ from some vertex in $S_i$, for every $i\in \{2,3\}$, it follows that $x_3$ (resp., $x_{2t}$) is adjacent to a vertex $v_1\in S_{i_1}$ (resp., $v_2\in S_{i_2}$); where $\{i_1,i_2\}=\{2,3\}$. Without loss of generality, suppose that $i_1=2$ and $i_2=3$. Note that $x_3x_4v_1$ and $x_{2t}x_{2t-1}v_2$ are both triangles. Now, since $x_2$ is at distance less than $4$ from a vertex in $S_3$, it follows that $v_1$ is adjacent to a vertex $v'_1\in S_3$. Let $S'_2=(S_2\setminus \{v_1\})\cup \{x_3\}$. Since any odd cycle containing $v_1$ must pass through $x_3$ or $v_1'$, we deduce that $v_1$ is not contained in any odd cycle in $G\setminus(S_1\cup S'_2\cup S_3)$. Thus, $\phi(S_1,S'_2,S_3)<\phi(S_1,S_2,S_3)$, a contradiction.
    \end{proof}

    Now, each $3$-vertex in $C$ is contained in exactly one triangle and this triangle intersects $C$ at two vertices. Thus, we deduce that the number of $3$-vertices in $C$ is even. Therefore, $C$ contains a $2$-vertex. Without loss of generality, suppose that $x_1$ is a $2$-vertex. As $x_1$ is at distance at most $3$ from a vertex in $S_i$, for every $i\in \{1,2,3\}$, we may suppose the following: (1) $x_2$ is adjacent to a vertex $v_1\in S_1$, (2) for some $\ell\in \{2t,2t+1\}$, $x_{\ell}$ is adjacent to a vertex in $S_2$, say $v_2$, and (3) $v_1$ or $v_2$ is adjacent to a vertex in $S_3$. Without loss of generality, suppose $v_1$ is adjacent to $v'_1\in S_3$. Let $S'_1=(S_1\setminus \{v_1\})\cup \{x_2\}$. Again, we have $\phi(S'_1,S_2,S_3)<\phi(S_1,S_2,S_3)$, which is a contradiction.  
\end{proof}

\begin{theorem}\label{t4}
    Every connected claw-free subcubic graph, except $\mathcal{H}$, is $(1,1,3,3,3)$-packing colorable.
\end{theorem}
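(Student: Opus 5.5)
The proof follows the template of Theorem~\ref{t3}. We argue by minimal counterexample and combine Proposition~\ref{prop1} with Lemmas~\ref{l8}--\ref{l11}.

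Let $G$ be a connected claw-free subcubic graph with $G\neq\mathcal{H}$ that is not $(1,1,3,3,3)$-packing colorable, chosen of minimum order. We first dispose of the degenerate cases. If $G$ is a cycle, we color it with $1_a,1_b$, using one vertex of color $3_a$ if the cycle is odd. If $G=K_4$, we use $1_a,1_b,3_a,3_b$ on its four vertices. Suppose $G$ has a leaf $u$. Then $G-u$ is connected and claw-free subcubic. We need $G-u\neq\mathcal{H}$, and this holds because $\mathcal{H}$ is cubic, so adding a pendant vertex to it would create a $4$-vertex. By minimality, $G-u$ has a $(1,1,3,3,3)$-packing coloring. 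The unique neighbor of $u$ carries at most one of the colors $1_a,1_b$, so $u$ receives the other one. We may therefore assume $\delta(G)\ge 2$ and that $G$ contains a $3$-vertex, which places us in the setting of Section~5. If $C_G=K_4$, Proposition~\ref{prop1} finishes the proof. One subtlety: minimality is only used for the leaf reduction, and the removal of a leaf never produces $\mathcal{H}$ or $K_4$. If $G-u=K_4$, it is colorable directly.

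Now assume $C_G\neq K_4$. Lemma~\ref{l8} gives a proper $3$-coloring of $C_G$. Lemma~\ref{l9} then gives three disjoint $3$-packings $S_1,S_2,S_3$ of $S_G$ meeting every block $V_j$. By Lemma~\ref{l10} we can choose them so that $S_G-(S_1\cup S_2\cup S_3)$ is triangle-free.

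Next we transfer these packings from $S_G$ to $G$. By Remark~\ref{r1}, distances in $G$ between vertices of $S_G$ are at least those in $S_G$, so each $S_i$ remains a $3$-packing in $G$. Every triangle of $G$ is a triangle of $S_G$, so $G-(S_1\cup S_2\cup S_3)$ is also triangle-free. Among such triples we take one minimizing the number of odd cycles. Lemma~\ref{l11} shows this number is $0$, so $G-(S_1\cup S_2\cup S_3)$ is bipartite.

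Finally, we split $G-(S_1\cup S_2\cup S_3)$ into independent sets $I_1,I_2$ and color $I_1,I_2,S_1,S_2,S_3$ with $1_a,1_b,3_a,3_b,3_c$ respectively. This is a $(1,1,3,3,3)$-packing coloring, contradicting the choice of $G$.

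The main point requiring care is the reduction bookkeeping rather than any new argument: making sure the leaf-deletion step keeps us within the hypotheses excluding $\mathcal{H}$ and $K_4$, and that the packings built in $S_G$ really are $3$-packings in $G$ after the bad paths are restored. The latter is exactly Remark~\ref{r1}.
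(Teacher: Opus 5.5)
Your proposal is correct and follows the paper's proof essentially step by step: a minimal counterexample, the leaf reduction (using that $\mathcal{H}$ is cubic, so $G-u\neq\mathcal{H}$), Proposition~\ref{prop1} for the case $C_G=K_4$, then Lemmas~\ref{l8}--\ref{l11}, and finally a bipartition of $G-(S_1\cup S_2\cup S_3)$. Your only additions are details the paper leaves implicit: coloring cycles and $K_4$ explicitly, and invoking Remark~\ref{r1} to check that the packings built in $S_G$ remain $3$-packings in $G$.
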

\begin{proof}
    On the contrary, suppose that $G$ is a counterexample of minimum order. Clearly $G$ is neither a cycle nor $K_4$. Suppose to the contrary that $G$ contains a leaf $u$. Since $\mathcal{H}$ is cubic, $G-u\neq \mathcal{H}$. Consider a $(1,1,3,3,3)$-packing coloring of $G-u$ using the colors $1_a,1_b,3_a,3_b,3_c$. Then, either $1_a$ or $1_b$ is not assigned to the neighbor of $u$, say $1_a$. By coloring $u$ using the color $1_a$ we obtain a $(1,1,3,3,3)$-packing coloring of $G$, a contradiction. Then, we may assume that $\delta(G)\geq 2$. Moreover, as $G$ is not a cycle, $G$ contains a $3$-vertex. By Proposition \ref{prop1}, it follows that $C_G\neq K_4$. By Lemma \ref{l11}, there exist three disjoint $3$-packings $S_1,S_2,S_3$ such that $\phi(S_1,S_2,S_3)=0$. Let $\{I_1,I_2\}$ be a partition of $V(G)\setminus (S_1\cup S_2\cup S_3)$ such that $I_1$ and $I_2$ are independent. Color the vertices of $I_1$, $I_2$, $S_1$, $S_2$, and $S_3$ with the colors $1_a$, $1_b$, $3_a$, $3_b$, and $3_c$, respectively, to obtain a $(1,1,3,3,3)$-packing coloring of $G$.
\end{proof}

\section{Conclusion and open problems}
We proved that every claw-free subcubic graph is $(1,1,2,3)$-packing colorable. The result is sharp in the sense that there are infinitely many claw-free subcubic graphs that are neither $(1,1,3,3)$- nor $(1,2,2,3)$-packing colorable.

\begin{proposition}\label{sharp1}
    The graph $\mathcal{A}_1$ (see Figure \ref{figure_2}) is neither $(1,1,3,3)$- nor $(1,2,2,3)$-packing colorable. Moreover, there are infinitely many connected claw-free subcubic graphs that contain $\mathcal{A}_1$ as a subgraph.
\end{proposition}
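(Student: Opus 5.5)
We do not see Figure \ref{figure_2}, so the plan has to rest on what $\mathcal{A}_1$ presumably is: a small claw-free subcubic graph, built from triangles or diamonds, in which too many vertices lie pairwise within distance $3$. The natural candidate is a graph of diameter at most $3$ with independence number $2$ (or with a tightly constrained maximum independent set), for instance $K_4$ minus an edge with triangles attached, or a subgraph of $\mathcal{H}$. The proof is then a counting argument on colour classes, split into the two sequences.

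For the $(1,1,3,3)$ part, let $n=|V(\mathcal{A}_1)|$. If $\operatorname{diam}(\mathcal{A}_1)\le 3$, each $3$-packing class contains at most one vertex. Hence $n-2$ vertices must be covered by two independent sets, so $\mathcal{A}_1-\{a,b\}$ must be bipartite for some pair $a,b$. I would verify that no two vertices meet every odd cycle (in particular every triangle). Since $\mathcal{A}_1$ should contain at least three vertex-disjoint triangles, this is immediate. If instead the diameter exceeds $3$, I would enumerate the possible $3$-packings. These are few: pairs of vertices at distance at least $4$, which in a small graph occur only between designated ``end'' vertices. For each choice I would check that a triangle survives in the remainder.

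For the $(1,2,2,3)$ part, $\mathcal{A}_1$ must be partitioned into one independent set, two $2$-packings and one $3$-packing. Every triangle has at most one vertex in each class, and the classes $2,2,3$ together must meet every triangle in at least two vertices. I would double count: with $t$ disjoint triangles, at least $2t$ vertices must come from the classes $2,2,3$. Each $2$-packing contains at most one vertex per closed neighbourhood, and these sizes are bounded by a packing bound for the specific graph, e.g.\ at most $\lfloor n/4\rfloor$ in a cubic-ish part. Comparing $2t$ with $2\rho_2+\rho_3$, where $\rho_2$ and $\rho_3$ are the maximum sizes of a $2$-packing and a $3$-packing, should give the contradiction. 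If the inequality is not strict, I would fix the $3$-vertex class, which has at most one or two options up to symmetry, and do a short case analysis on the $2$-packings.

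For the infinite family, I would attach $\mathcal{A}_1$ through a $2$-vertex of it, or by subdividing an edge leaving it, to an arbitrary path or cycle of triangles. Claw-freeness and subcubicity are preserved as long as the attachment vertex had degree $2$ and the new neighbour lies in a new triangle. Noncolourability is inherited, because any $S$-packing coloring of a graph restricts to one of its subgraphs: distances in the subgraph are at least the distances in the host.

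The main obstacle is the $(1,2,2,3)$ case analysis, which depends on the exact structure of $\mathcal{A}_1$. I would first try the pure counting bound and fall back to symmetry-reduced casework on the placement of colour $3$.
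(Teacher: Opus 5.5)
Your $(1,1,3,3)$ argument and your infinite-family argument are fine and match the paper. $\mathcal{A}_1$ is a $6$-cycle $C$ on its six $3$-vertices, plus three $2$-vertices attached to three disjoint pairs of consecutive vertices of $C$; equivalently, it is $\mathcal{H}$ minus one triangle. It has diameter $3$ and three vertex-disjoint triangles, so two singleton $3$-classes cannot meet all three triangles.

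\textbf{The gap is in the $(1,2,2,3)$ part.} Your comparison of $2t$ with $2\rho_2+\rho_3$ gives no contradiction. Here $t=3$, $\rho_3=1$ and $\rho_2=3$, since the three $2$-vertices are pairwise at distance $3$. So $6<7$.

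You can use that the two $2$-classes are disjoint. The $2$-vertices form the unique $2$-packing of size $3$, because pairwise disjoint closed neighbourhoods of sizes $3$ or $4$ must fit into $9$ vertices. Even then you only get $6\le 6$, and equality is attained. Take the $2$-vertices as one $2$-class, two antipodal vertices of $C$ as the other, and a suitable third vertex of $C$ as the $3$-class. Every triangle then meets the classes $2,2,3$ twice. The only failure is that the three leftover vertices span an edge of $C$, so they cannot form colour class $1$.

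Hence no argument using only ``two vertices per triangle from classes $2,2,3$'' can succeed. The independence of colour class $1$ has to enter, and that is exactly the fallback case analysis you postpone and never carry out.

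One way to finish is to count all nine vertices:
\begin{itemize}
\item Colour $1$ has at most $3$ vertices, because the triangles partition the vertex set.
\item Colour $3$ has at most $1$ vertex.
\item Each $2$-class has at most $3$ vertices, with equality only for the set of $2$-vertices.
\end{itemize}
If neither $2$-class is that set, at most $3+2+2+1=8$ vertices get a colour.

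Otherwise colour $1$, the other $2$-class and colour $3$ all lie in $C$. The other $2$-class has at most $2$ vertices there, so colour $1$ must have $3$ vertices. It is therefore an alternating triple of $C$, whose complement in $C$ consists of three vertices pairwise at distance $2$. Then the other $2$-class has at most one vertex, and again at most $8$ vertices are coloured.

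This is essentially the paper's split according to how many vertices of the $6$-cycle receive colour $1$.
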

\begin{proof}
    The graph $\mathcal{A}_1$ contains three vertex-disjoint triangles. Then, in any $(1,1,3,3)$-packing coloring of $\mathcal{A}_1$, there are at least three  vertices colored by a color $3$. This is impossible since the diameter of $\mathcal{A}_1$ is $3$. 

    Suppose that $\mathcal{A}_{1}$ has a $(1,2,2,3)$-packing coloring. Since the diameter of $\mathcal{A}_{1}$ is $3$, then at most one vertex can be colored by $3$. If three of the vertices of the cycle of order $6$ are colored by $1$, then no more vertices are colored by $1$. In this case, at most four of the remaining vertices can be colored by the two colors $2$, a contradiction since the order of $\mathcal{A}_1$ is $9$. Otherwise, at most two vertices of the cycle of order $6$ are colored by $1$, then at most five of the remaining vertices can be colored by the two colors $2$ and the color $1$, a contradiction.

    Finally, since $\mathcal{A}_1$ contains a $2$-vertex, there are infinitely many claw-free subcubic graphs that contain $\mathcal{A}_1$ as a subgraph.
\end{proof}

Furthermore, the $(1,1,2,3)$-packing colorability is sharp in the sense that there is a claw-free subcubic graph that is not $(1,1,2,4)$-packing colorable.

\begin{proposition}
    The graph $\mathcal{A}_2$ (see Figure \ref{figure_2}) is not $(1,1,2,4)$-packing colorable.
\end{proposition}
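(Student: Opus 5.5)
The plan is a counting argument built on the triangle structure of $\mathcal{A}_2$, in the same spirit as the proof of Proposition~\ref{sharp1}. Suppose, to the contrary, that $\mathcal{A}_2$ admits a $(1,1,2,4)$-packing coloring with color classes $I_1,I_2$ (the two colors $1$), $X$ (color $2$) and $Z$ (color $4$). I will use one basic observation. The two classes of color $1$ are independent, so they contain at most two vertices of any triangle. Hence every triangle of $\mathcal{A}_2$ meets $X\cup Z$. More generally, $\mathcal{A}_2-(X\cup Z)$ must be bipartite, so every odd cycle of $\mathcal{A}_2$ meets $X\cup Z$.

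The first step bounds $|Z|$. I will read the diameter of $\mathcal{A}_2$ off Figure~\ref{figure_2}, and I expect it to be at most $4$. Then any two vertices are at distance at most $4$, so $|Z|\le 1$. If the diameter turns out to be larger, I will instead list the few pairs of vertices at distance at least $5$ and show that $Z$ contains at most one vertex of a suitable central part of the graph.

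The second step bounds the $2$-packing $X$. I will pick a family $\mathcal{T}$ of vertex-disjoint triangles of $\mathcal{A}_2$, together with odd cycles if needed, that must all be hit by $X\cup Z$. The aim is to show that no $2$-packing meets all members of $\mathcal{T}$ except possibly one. The expected mechanism is that the triangles are linked by single edges or short paths, so that two triangles at distance at most $2$ from each other cannot both be hit by $X$. One then needs $|\mathcal{T}|-1$ members hit by a $2$-packing, which contradicts the maximum size of such a $2$-packing in the relevant region. To organise this, I will case-split on the position of the unique vertex $z\in Z$, if it exists, up to the automorphisms of $\mathcal{A}_2$. In each case I delete the triangle (or odd cycle) containing $z$. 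I then check that the remaining triangles cannot be hit by a single $2$-packing, either because two of them are at distance at most $2$ or because forcing $X$ on them leaves an odd cycle free of $X\cup Z$.

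The main obstacle is this case analysis over the position of $z$ and the resulting choices of $X$. It is where the specific shape of $\mathcal{A}_2$ matters, and where an odd cycle rather than a triangle may be needed to reach the contradiction. I would first try the crude count: the number of disjoint triangles versus $1+\alpha_2$, where $\alpha_2$ is the $2$-packing number of $\mathcal{A}_2$ minus $N[z]$. Only if that count does not suffice would I refine it using odd cycles of length $5$ or more that avoid the chosen vertices. If even that fails, a short exhaustive check of all $2$-packings of $\mathcal{A}_2$ is feasible, since the graph is small.
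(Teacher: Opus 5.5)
Your framework matches the paper's: the four triangles of $\mathcal{A}_2$ are vertex-disjoint, each must meet $X\cup Z$, and the diameter is exactly $4$ (the bottom vertex of the left column and the top vertex of the right column are at distance $4$), so $|Z|\le 1$. The paper then finishes in one line. $\mathcal{A}_2$ has no $2$-packing of size $3$, so $|X|+|Z|\le 3<4$. This is exactly the step your proposal leaves open, and the mechanism you propose for it cannot work.

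Write $T_1,T_2$ for the lower and upper triangles of the left column and $T_3,T_4$ for those of the right column. Every two of these triangles can be hit at the same time by a $2$-packing. For instance, the bottom vertex of $T_1$ and the top vertex of $T_2$ are at distance $3$, although $T_1$ and $T_2$ are joined by two edges. So ``two triangles at distance at most $2$ cannot both be hit by $X$'' fails for every pair, and a case analysis on $z$ built on it will not close. The real obstruction is that no \emph{three} vertices of $\mathcal{A}_2$ are pairwise at distance at least $3$.

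Your ``crude count'' is also mis-specified. Colours $2$ and $4$ impose no distance condition on each other, so $X$ may contain neighbours of $z$. For example, if $z$ is the bottom vertex of $T_1$, the bottom vertex of $T_3$ may be coloured $2$. Restricting to $\mathcal{A}_2-N[z]$ is therefore unjustified; the relevant quantity is the $2$-packing number of $\mathcal{A}_2$ itself. Your last-resort exhaustive check would find this fact, but as written the proposal neither states nor proves it.

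The missing fact is quick to verify.
\begin{itemize}
\item The automorphisms of $\mathcal{A}_2$ are generated by the left--right and top--bottom reflections and the mirror symmetry of each column about its vertical axis. They have two vertex orbits: the four top/bottom vertices and the eight others.
\item For the bottom vertex of $T_1$, the vertices at distance at least $3$ are the top vertex of $T_2$ and the three vertices of $T_4$.
\item For either other vertex of $T_1$, they are the two non-bottom vertices of $T_3$ and the three vertices of $T_4$.
\end{itemize}
In both cases these vertices are pairwise at distance at most $2$, so every $2$-packing of $\mathcal{A}_2$ has at most two vertices. With $|X|\le 2$ and $|Z|\le 1$, the four disjoint triangles give the contradiction directly. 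No case split on the position of $z$ and no odd cycles are needed.
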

\begin{proof}
    Since $\mathcal{A}_2$ contains four vertex-disjoint triangles, at least four vertices should be colored using the colors $2$ and $4$ in any $(1,1,2,4)$-packing coloring. But the diameter of $\mathcal{A}_2$ is $4$, then at most one vertex can be colored by the color $4$. Moreover, we cannot color three vertices by the color $2$, a contradiction.
\end{proof}
\begin{figure}[h!]
\centering
\begin{tikzpicture}[
    node/.style={circle, draw, fill=gray!10, minimum size=1mm},
    edge/.style={thick},
    green edge/.style={thick, draw=green},
    red edge/.style={thick, draw=red},
    black edge/.style={thick, draw=black},
    dotted edge/.style={thick, dotted},
    weight/.style={font=\bfseries\small}
]

\node[node] (1) at (0,0) {};
\node[node] (2) at (1,0) {};
\node[node] (3) at (2,0) {};
\node[node] (4) at (3,0) {};
\node[node] (5) at (2.5,1) {};
\node[node] (6) at (2,2) {};
\node[node] (7) at (1.5,3) {};
\node[node] (8) at (1,2) {};
\node[node] (9) at (0.5,1) {};

\draw (1) -- (2);
\draw (2) -- (3);
\draw (3) -- (4);
\draw (4) -- (5);
\draw (5) -- (6);
\draw (6) -- (7);
\draw (7) -- (8);
\draw (8) -- (9);
\draw (9) -- (1);
\draw (9) -- (2);
\draw (3) -- (5);
\draw (6) -- (8);

\node at (1.5, -1) {$\mathcal{A}_1$};


\node[node] (1) at (5.5,0) {};
\node[node] (2) at (5,1) {};
\node[node] (3) at (6,1) {};
\node[node] (4) at (5,2) {};
\node[node] (5) at (6,2) {};
\node[node] (6) at (5.5,3) {};

\node[node] (7) at (7.5,0) {};
\node[node] (8) at (7,1) {};
\node[node] (9) at (8,1) {};
\node[node] (10) at (7,2) {};
\node[node] (11) at (8,2) {};
\node[node] (12) at (7.5,3) {};

\draw (1) -- (2);
\draw (2) -- (3);
\draw (1) -- (3);
\draw (2) -- (4);
\draw (3) -- (5);
\draw (4) -- (5);
\draw (4) -- (6);
\draw (5) -- (6);
\draw (1) -- (7);
\draw (7) -- (8);
\draw (7) -- (9);
\draw (8) -- (9);
\draw (8) -- (10);
\draw (9) -- (11);
\draw (10) -- (11);
\draw (10) -- (12);
\draw (11) -- (12);
\draw (6) -- (12);

\node at (6.5, -1) {$\mathcal{A}_2$};

\end{tikzpicture}
\caption{On the left, a claw-free subcubic graph that is neither $(1,1,3,3)$-packing colorable nor $(1,2,2,3)$-packing colorable. On the right, a claw-free subcubic graph that is not $(1,1,2,4)$-packing colorable.}\label{figure_2}
\end{figure}

On the other hand, we proved that every connected claw-free subcubic graph, except $\mathcal{H}$, is $(1,1,3,3,3)$-packing colorable. The result is sharp in the sense that there are infinitely many connected claw-free subcubic graphs that are not $(1,1,3,3)$-packing colorable.

Moreover, there exist connected claw-free subcubic graphs that are not $(1,1,3,3,5)$-packing colorable. For instance, one can obtain such a graph from $\mathcal{A}_1$ by attaching a triangle to each $2$-vertex of $\mathcal{A}_1$, and then joining these three triangles cyclically by edges. However, we were unable to find a counterexample to $(1,1,3,3,4)$-packing colorability. This naturally leads to the following question.

\begin{problem}
Is every connected claw-free subcubic graph, distinct from $\mathcal{H}$, $(1,1,3,3,4)$-packing colorable?
\end{problem}

Moreover, our result verifies the conjectured $(1,1,2,3)$-packing colorability for claw-free subcubic graphs. The original problem of Gastineau and Togni, however, remains widely open.

\begin{problem}\cite{16}
Is every subcubic graph, except the Petersen graph, $(1,1,2,3)$-packing colorable?
\end{problem}

In another direction, Gastineau and Togni \cite{16} asked whether every subcubic graph other than the Petersen graph is $(1,2,2,2,2,2)$-packing colorable. This problem has subsequently been stated as a conjecture in several papers. El Zein and Mortada \cite{AM2} further asked whether every subcubic graph in which every $3$-vertex lies on a cycle of length at most four is $(1,2,2,2,2)$-packing colorable. Motivated by this question, we propose the following conjecture for claw-free subcubic graphs.

\begin{conjecture}
    Every claw-free subcubic graph is $(1,2,2,2,2)$-packing colorable.
\end{conjecture}

\end{document}